\documentclass[11pt]{article}
\usepackage[utf8]{inputenc}
\usepackage[T1]{fontenc}

\usepackage[top=1in, bottom=1in, left=1.25in, right=1.25in]{geometry}

\usepackage{amsmath,amssymb,amsthm,amsfonts,mathtools}
\usepackage{color}
\usepackage{multirow}
\usepackage[absolute,overlay]{textpos}

\usepackage[sort,compress]{cite}
\usepackage{graphicx}
\usepackage{ragged2e}
\usepackage{tabto}

\usepackage{lmodern}

\usepackage[sf,bf,medium]{titlesec}
\usepackage{setspace}
\usepackage{bm}

\usepackage{pgfplots}
\pgfplotsset{compat=1.18} 

\usepackage[font=footnotesize]{caption}
\usepackage[font=footnotesize]{subcaption}

\usepackage{booktabs}
\usepackage{siunitx}
\usepackage{isomath}

\usepackage[plainpages=false, colorlinks=true, citecolor=blue, filecolor=blue,     
linkcolor=blue, urlcolor=blue]{hyperref}
\usepackage{cleveref}
\usepackage{enumitem}

\usepackage{placeins}
\makeatletter
\AtBeginDocument{%
  \expandafter\renewcommand\expandafter\subsection\expandafter
    {\expandafter\@fb@secFB\subsection}%
  \newcommand\@fb@secFB{\FloatBarrier
    \gdef\@fb@afterHHook{\@fb@topbarrier \gdef\@fb@afterHHook{}}}%
  \g@addto@macro\@afterheading{\@fb@afterHHook}%
  \gdef\@fb@afterHHook{}%
}
\makeatother

\usepackage{float}
\usepackage{color,graphicx,epsfig,fancyhdr}

\usepackage[T1]{fontenc}
\usepackage{comment}
\usepackage{commath}
\usepackage{bbm}
\usepackage{listings}
\usepackage{mathrsfs}

\newtheorem{theorem}{Theorem}[section]

\newtheorem{lemma}{Lemma}[section]

\newcommand{\Ione}{\mathcal{I}_\mathrm{DSM\text{-}1}}
\newcommand{\Itwo}{\mathcal{I}_\mathrm{DSM\text{-}2}}

\DeclareSymbolFont{matha}{OML}{txmi}{m}{it}
\DeclareMathSymbol{\varv}{\mathord}{matha}{118}
\newcommand{\dis}{\operatorname{dist}}
\newcommand{\dd}{\mathrm{d}}

\title{
On Sampling Methods for Inverse Biharmonic Scattering Problems in Supported Plates }
\author{
Carlos Borges\thanks{School of Data, Mathematical, and Statistical Sciences, University of Central Florida, Orlando, FL 32816 \textit{Email:Carlos.Borges@ucf.edu} }
\and  
Rafael Ceja-Ayala\thanks{School of Mathematical and Statistical Sciences, Arizona State University, Tempe, AZ 85287\textit{Email:rcejaaya@asu.edu} }
\and
Peter Nekrasov\thanks{Committee on Computational and Applied Mathematics, University of Chicago, Chicago, IL 60637 \textit{Email:pn3@uchicago.edu} }}

\begin{document}

\maketitle

\numberwithin{equation}{section}

\begin{abstract}
We study the inverse problem of qualitatively recovering a supported cavity in a thin elastic plate governed by the flexural (biharmonic) wave equation, using far-field pattern measurements. We derive a reciprocity principle and a factorization of the far-field operator for the supported plate boundary conditions, and we analyze its range properties to justify both the linear sampling method (LSM) and the direct sampling method (DSM). Numerical experiments assess the performance of LSM and DSM under noise, a limited amount of data, multiple scattering, and variations in the Poisson's ratio. The results show that both methods robustly recover the obstacle’s location and convex hull, with DSM offering improved stability and reduced computational cost.
\end{abstract}

\noindent \textbf{Keywords:} Biharmonic scattering, flexural waves, supported plates, reciprocity relations, Kirchhoff--Love plate theory, inverse problems.\\

\section{Introduction}

In recent years, biharmonic problems have garnered significant attention due to their applications in the analysis of resonances in bridges and large structures~\cite{drabek2003nonlinear, PhysRevB.73.064301, lindsay2018boundary, lindsay2015transition}, simulation of acoustic ``black holes''~\cite{pelat2020acoustic}, development of topological waves in elastic systems~\cite{ganti2020topological,mousavi2015topologically}, nondestructive testing of aerospace composites~\cite{memmolo2018guided}, design of platonic crystals and metamaterials ~\cite{elasticcloak, farhat2012broadband, farhat2009cloaking, gao2018theoretical}, and the study of ice shelf flexure and ice-ocean interactions \cite{nekrasov2023ocean, askham2025surface, askham2025integral, nekrasov2025boundary}.

We consider the inverse problem of reconstructing the shape of a supported cavity in a thin elastic plate from far-field scattering data. Letting the cavity occupy a compact domain $D\subset\mathbb{R}^2$ with smooth boundary $\partial D$, the vertical displacement $u$ of the  plate satisfies the following time-harmonic equation:
\begin{equation}
\begin{cases}
    \Delta^2 u - k^4 u = 0, & \text{in } \mathbb{R}^2 \setminus \overline{D}, \\
    u = 0,  & \text{on } \partial D, \\
   \nu \Delta u + (1-\nu) \partial_n^2 u = 0, & \text{on } \partial D,
\end{cases} \label{eq:BVP}
\end{equation}
where $\Delta$ denotes the two-dimensional Laplacian, $\partial_n$ denotes the directional derivative with respect to the normal vector $n$, $\nu \in [-1,0.5]$ is the Poisson's ratio of the plate, and $k > 0$ is the wavenumber, which depends on the thickness of the plate and the time frequency of the incident field. The first boundary condition corresponds to the displacement of the plate, while the second boundary condition corresponds to the bending moment. These boundary conditions are frequently used to model plates that are supported by internal columns and rods \cite{seide1958,zhao2002plate} or for ice shelves supported by land at the grounding line \cite{MEYLAND2021}.

Previous work on biharmonic inverse scattering has focused on the recovery of volumetric potentials from boundary and far-field data \cite{logestimates,ikehata1991special,krupchyk2012determining,yang2014determining,liu2024stability, inversebackscattering} or the reconstruction of obstacles with \textit{clamped plate} boundary conditions \cite{HLL-biharmonic,guo2024direct,bourgeois2020linear, optiforinversebiha}. Many studies have focused on the global uniqueness problem for the recovery of potentials or other material parameters associated with biharmonic operators \cite{firstorderper,krupchyk2012determining,partialdatainverse}. Related work has also been done to understand the reconstruction of potentials and internal sources using passive measurements generated by unknown sources \cite{passivemeasure,chang2024analysis}. 

In the present study, we consider methods for determining the location and shape of a supported cavity from far-field measurements of flexural waves. In particular, we consider two sampling methods: the \emph{linear sampling method} (LSM) and the \emph{direct sampling method} (DSM). The LSM reconstructs the cavity boundary by solving a far-field equation for synthetic incident fields, while the DSM offers a computationally efficient alternative via the application of the far-field operator. Our analysis provides a rigorous justification of both the LSM and DSM by examining both the range and reciprocity of the far-field operator for the supported plate problem. Numerical simulations further demonstrate that both methods can accurately and robustly recover the shape and position of the supported cavity, even with a limited amount of data and noisy measurements.

We present several new analytical and computational results for the inverse biharmonic scattering problem. First, we present a reciprocity relation that extends the clamped plate reciprocity result of \cite{harris2026sampling} to the supported plate setting.
Next, we show that the well-established factorization of the far-field operator holds for supported plates, providing a rigorous foundation for the LSM. We also introduce a factorization of the data-to-pattern operator based on the boundary-integral formulations of \cite{farkas,nekrasov2025boundary}, allowing us to study the properties of the DSM. Finally, we perform a comprehensive numerical study of the frequency dependence and robustness to noise and other parameters of the LSM and DSM.

The paper is organized as follows. In Section~\ref{sec:direct-prob-LDSM}, we formulate the forward problem and provide definitions of the fundamental solution and far-field pattern. Section~\ref{sec:LSM} focuses on the linear sampling method, establishing the reciprocity principle and other results about the far-field operator. In Section~\ref{sec:DSM}, we focus on the direct sampling method by deriving a factorization of the data-to-pattern operator and describing the indicator function. Section~\ref{sec:num_results} reports numerical experiments that illustrate the performance and highlight the limitations of the methods across a range of scenarios. Finally, Section~\ref{sec:conclusions} offers concluding remarks and directions for future work.

\section{Problem setup}\label{sec:direct-prob-LDSM}

Let $u$ be the sum of a known incident field $u^i$ and an unknown scattered field $u^s$. If $u^i$ satisfies the homogeneous equation in $\mathbb{R}^2 \setminus \overline{D}$, we have that $u^s$ satisfies 
\begin{equation}
\begin{cases}
    \Delta^2 u^s - k^4 u^s = 0, & \text{in } \mathbb{R}^2 \setminus \overline{D}, \\
    u^s = -u^i ,  & \text{on } \partial D,  \\
     M[u^s] = -M[u^i]  , & \text{on } \partial D ,
    \label{direct1}
\end{cases}
\end{equation}
where the bending moment operator $M$ is defined by
\begin{equation}\label{eq:bendingmom}
M[u] = \nu \Delta u + (1-\nu) \partial_n^2 u , 
\end{equation}
and $u^s$ satisfies the  Sommerfeld radiation condition at infinity:
\begin{equation}
{\partial_r u^s} - i ku^s = o ( r^{-1/2} ) , \quad \text{ as } \quad r \rightarrow \infty . \label{direct2}
\end{equation}
 Let  $\Phi(x,y)$ denote the fundamental solution of the biharmonic wave equation satisfying $$\Delta^2\Phi(x,y)-k^4\Phi(x,y)=\delta(x-y), \quad \text{in} \hspace{.2cm} \mathbb{R}^2 ,$$
along with the Sommerfeld radiation condition \eqref{direct2}. The formula for $\Phi$ is given by:
\begin{equation*}
    \Phi(x,y)
      = \frac{i}{8k^2}
        \bigl( H^{(1)}_0(k|x-y|) - H^{(1)}_0(ik|x-y|)\bigr),
\end{equation*}
where $H^{(1)}_0$ is the zeroth-order Hankel function of the first kind. Note that because $H_0^{(1)}(ik|x-y|)$ decays exponentially away from any source(s), the asymptotic behavior of the fundamental solution is similar to that of $H_0^{(1)}(k|x-y|)$:

$$ \Phi (x , y )= \frac{e^{\text{i}k|x|}}{\sqrt{|x|}}\left( \Phi^{\infty}(\hat{x},y)+\mathcal{O}\left(\frac{1}{|x|}\right) \right) \hspace{.3cm}\text{as}\hspace{.3cm}|x|\to \infty\,, $$
where $\hat{x} := \frac{x}{|x|} \in \mathbb{S}^1$ and  $\Phi^\infty(\hat{x},y)$ denotes the far-field pattern of the fundamental solution, given by:
\begin{equation}
    \Phi^{\infty}(\hat{x},y)
      = -\frac{1}{2k^2}
        \frac{e^{i\pi/4}}{\sqrt{8\pi k}}
        e^{-ik \hat{x} \cdot  y} \,,  \label{eq:Phi_ff} \end{equation}
The scattered field $u^s$ admits a similar expansion at infinity:
\begin{equation}
u^s(x)= \frac{e^{\text{i}k|x|}}{\sqrt{|x|}}\left\{u^{\infty}(\hat{x})+\mathcal{O}\left(\frac{1}{|x|}\right) \right\}\hspace{.3cm}\text{as}\hspace{.3cm}|x|\longrightarrow \infty\,, \label{far-fieldpattern}
\end{equation}
where $u^\infty(\hat{x})$ denotes the far-field pattern corresponding to the scattered field $u^s$. In this paper, we will often use $u^s(x,d)$ to denote the solution of \eqref{direct1} for an incident plane wave in the direction $d \in \mathbb{S}^1$. Similarly, $u^\infty(\hat{x},d)$ represents the far-field pattern of $u^s(\hat{x},d)$.  

Finally, we define the far-field operator $\mathcal{F} : L^2(\mathbb{S}^1) \to L^2(\mathbb{S}^1)$ as:
\begin{equation}
    \mathcal{F}[g](\hat{x}) = \int_{\mathbb{S}^1} u^{\infty}(\hat{x}, d) g(d) \, \mathrm{d}s(d).
    \label{far-field}
\end{equation}
This operator will play a prominent role in both the linear sampling and direct sampling methods. 


In the next two sections, we introduce and analyze the sampling methods considered in this paper. We start with the \emph{linear sampling method} (\Cref{sec:LSM}) followed by the \emph{direct sampling method} (\Cref{sec:DSM}). The fundamental principle of both sampling methods is to determine if a given sampling point $z \in \mathbb{R}^2$ lies inside or outside the domain~$D$ by evaluating a suitable indicator function. For the linear sampling method, this indicator function is given by the solution to a particular integral equation which becomes unbounded for points not in $D$. Meanwhile, the direct sampling method uses an indicator function that comes from applying the far-field operator directly to the fundamental solution. As we will see, this indicator function is more stable to evaluate and leads to better numerical stability. Using either of these indicator functions, one can infer an approximation of the obstacle for the supported plate problem.

\section{Linear sampling method}\label{sec:LSM}

The goal of the linear sampling method (LSM) is to find a function $g_z \in L^2(\mathbb{S}^1)$ for each $z \in \mathbb{R}^2$ such that 
\begin{equation}
    \mathcal{F} [g_z](\hat{x}) = \phi_z(\hat{x}), 
    \label{farfield}
\end{equation}
where $\phi_z(\hat{x}) := \Phi^{\infty}(\hat{x},z)$ is the far-field pattern of the fundamental solution. For the LSM, the $L^2$ norm of $g_z$ acts as an indicator function for the obstacle. In order to see this, we present the theoretical justification behind the linear sampling method (LSM). We begin by establishing a reciprocity principle for the biharmonic scattering problem with supported plate boundary conditions in \Cref{sec:reciprocity}. Then, in \Cref{sec:factorization} we extend the factorization of the far-field  operator to  the supported plate and prove that it is compact, injective, and has dense range, key components in deriving the LSM for our problem. Finally, we present the result that shows that the value of the indicator goes to infinity for sampling points outside the obstacle and remains bounded for points inside.

\subsection{Reciprocity principle}\label{sec:reciprocity}
In this section, we derive a reciprocity relation for the solution $u^s$ to the boundary value problem \eqref{direct1}-\eqref{direct2}. Similar reciprocity principles are well known for acoustic scattering \cite{coltonkress} and for the clamped plate problem \cite{harris2026sampling} but have not been studied for the supported plate problem. These identities are used in inverse scattering to study the corresponding far--field \cite{HLL-biharmonic} or near--field \cite{bourgeois2020linear} operators which are used in qualitative methods for recovering the obstacle.

\begin{theorem}\label{secondreciprocity}
Let $u^{\infty}(\hat{x}, d)$ denote the far-field pattern defined in \eqref{far-fieldpattern} for the supported plate problem \eqref{direct1} with incident field $u^{i} = e^{ikx \cdot d}$, where $d \in \mathbb{S}^1$. Then $u^{\infty}(\hat{x}, d)$ satisfies the reciprocity relation:
\begin{equation*}
    u^{\infty}(\hat{x}, d) = u^{\infty}(-d, -\hat{x}).
\end{equation*}
\end{theorem}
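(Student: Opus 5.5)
The plan is to follow the classical acoustic argument (as in Colton--Kress) and its clamped-plate analogue: derive a Green-type bilinear identity for $\Delta^2-k^4$ adapted to the supported boundary operators, show that the boundary form vanishes for two radiating solutions, and then express the far-field pattern as a boundary integral involving the incident wave.

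\textbf{Step 1 (Green's identity).} For smooth $u,v$ in a bounded domain $\Omega$, use the Rayleigh--Green formula for plates:
\[
\int_\Omega (v\Delta^2 u - u\Delta^2 v)\,\mathrm{d}x=\int_{\partial\Omega}\bigl(v\,N[u]-u\,N[v]+M[v]\,\partial_n v^{\ast}\bigr)\,\mathrm{d}s,
\]
written more precisely as $\int_{\partial\Omega}\bigl(v N[u]-M[u]\partial_n v - uN[v]+M[v]\partial_n u\bigr)\,\mathrm{d}s$. Here $M$ is the bending moment \eqref{eq:bendingmom} and $N[u]=\partial_n\Delta u+(1-\nu)\partial_s(\partial_n\partial_s u)$ is the Kirchhoff shear force, obtained from the bilinear form $a(u,v)=\int \nu\Delta u\Delta v+(1-\nu)\sum_{ij}\partial_{ij}u\,\partial_{ij}v$. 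Since $\partial D$ is closed, the corner and tangential-derivative terms integrate away. The $-k^4uv$ terms cancel by symmetry.

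\textbf{Step 2 (vanishing for radiating fields).} Split each radiating solution into its Helmholtz and modified Helmholtz parts: $u=u_H+u_M$ with $u_H=-(\Delta-k^2)u/(2k^2)$ and $u_M=(\Delta+k^2)u/(2k^2)$. The parts $u_M$ decay exponentially. Apply Step 1 on $B_R\setminus\overline D$ and let $R\to\infty$. On $\partial B_R$ the boundary form reduces asymptotically to the acoustic form $\int(v\partial_r u-u\partial_r v)$ (up to constants) for the Helmholtz parts, which tends to zero by \eqref{direct2}. The cross terms are exponentially small.

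\textbf{Step 3 (representation of the far field).} Write $\langle u,v\rangle:=\int_{\partial D}(vN[u]-M[u]\partial_nv-uN[v]+M[v]\partial_nu)\,\mathrm{d}s$. Using Green's formula with $\Phi$ on the exterior, together with the asymptotics \eqref{eq:Phi_ff}, we get $u^\infty(\hat x,d)=\langle u^s(\cdot,d),\,c\,e^{-ik\hat x\cdot y}\rangle$ with $c=-\frac{1}{2k^2}\frac{e^{i\pi/4}}{\sqrt{8\pi k}}$, possibly up to sign. By Step 1 applied inside $D$, $\langle u^i(\cdot,d),u^i(\cdot,-\hat x)\rangle=0$. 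By Step 2, $\langle u^s(\cdot,d),u^s(\cdot,-\hat x)\rangle=0$. Hence
\[
u^\infty(\hat x,d)-u^\infty(-d,-\hat x)=c\bigl(\langle u^s(\cdot,d),u^i(\cdot,-\hat x)\rangle-\langle u^s(\cdot,-\hat x),u^i(\cdot,d)\rangle\bigr)=c\,\langle u(\cdot,d),u(\cdot,-\hat x)\rangle,
\]
using antisymmetry of the form. Finally, $\langle u(\cdot,d),u(\cdot,-\hat x)\rangle=0$ because the total fields satisfy $u=0$ and $M[u]=0$ on $\partial D$: every term in the form contains either $u$, $v$, $M[u]$ or $M[v]$.

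\textbf{Main obstacle.} The delicate step is Step 2 (together with the constants in Step 3). The plate boundary form involves third derivatives and the tangential twisting term, and it is not a priori clear that it reduces on $\partial B_R$ to the Helmholtz radiation form. I would handle this by working with a boundary form on $\partial B_R$ that is expressed directly via Green's identity for $\Delta$ applied to the pair $(u,\Delta v)$, namely $\int(v\partial_n\Delta u-\Delta u\partial_nv+\Delta v\partial_n u-u\partial_n\Delta v)$. This form equals the $M$/$N$ form up to a boundary-tangential total derivative that vanishes on closed curves. Substituting the Helmholtz/modified decomposition then gives $k^2$ multiples of the standard acoustic forms, which vanish by the radiation condition.
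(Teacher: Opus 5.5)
Your proposal is correct and follows essentially the same route as the paper: represent $u^\infty(\hat x,d)$ through the plate Green identity with the $M$/$N$ boundary form tested against $\Phi$, subtract the analogous representation of $u^\infty(-d,-\hat x)$, and use $u=u^i+u^s$ together with $u=M[u]=0$ on $\partial D$. You also make explicit two facts that the paper's ``subtracting and using $u=u^i+u^s$'' step relies on without stating them. These are that the boundary form vanishes for two plane waves (Green's identity inside $D$), and that it vanishes for two radiating fields (your Helmholtz/modified-Helmholtz splitting on $\partial B_R$, using the $\Delta$-based form, which agrees with the $M$/$N$ form on closed curves).
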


\begin{proof}

We begin by noting the standard integration by parts formula \cite{HsWe2021}:
\begin{equation}
    \int_{\mathbb{R}^2 \setminus D} {v} \Delta^2 u  -u\Delta^2{v} \, \text{d} x  
= \int_{\partial D} \big( v N[ u] -  u N[v]  - \partial_n v M[u]  + \partial_n u M[v]  \big) \, \mathrm{d}s  \label{eq:IBP}
\end{equation}
where $M$ is the bending moment operator defined in \eqref{eq:bendingmom} and $N$ is the shear stress operator given by:
\begin{equation*}
    N[u] := \partial_{n} \Delta u + (1-\nu) \frac{\mathrm{d} \; }{\mathrm{d} s} \partial_{n} \partial_{\tau} u , 
\end{equation*}
where $\partial_\tau$ denotes the tangential derivative and $\frac{\mathrm{d}}{\mathrm{d} s}$ denotes the arc-length derivative. The proof proceeds as in \cite{harris2026sampling}. Letting $u = \Phi(x,y)$ and $v = u^s(x,d)$, and taking the limit as $|x|\to \infty$, one obtains the following expression for the far-field pattern:
\begin{align*}
   u^\infty(\hat{x},d) 
&=  \int_{\partial D}  \big( u^s(y,d) N [u^i(\cdot,-\hat{x})](y) -  u^i(y,-\hat{x}) N [u^s(\cdot,d)](y) \big) \text{d} s_y \\
&\quad - \int_{\partial D} \big( \partial_{n_y} u^s(y,d) M[u^i(\cdot,-\hat{x})](y)  - \partial_{n_y} u^i(y,-\hat{x}) M [u^s(\cdot,d)](y) \big) \text{d} s_y .
\end{align*}
where $u^{i}(y,-\hat{x}) = e^{-ik\hat{x}\cdot y}$. A similar formula holds for $u^\infty(-d,-\hat{x})$. Subtracting these two expressions and using the fact that $u = u^i + u^s$ leads to the following:
{\small
\begin{align*}
     u^\infty(\hat{x},d) - u^\infty(-d,-\hat{x}) &= \int_{\partial D} \big( u (y,-\hat{x}) N [u (\cdot,d)](y) -  u(y,d) N[u(\cdot,-\hat{x})](y) \big) \, \mathrm{d}s_y  \\ &  \quad + \int_{\partial D} \big( - \partial_{n_y} u(y,-\hat{x}) M [u (\cdot,d)](y)  + \partial_{n_y} u (y,d) M [u(\cdot,-\hat{x})](y)  \big) \, \mathrm{d}s_y
\end{align*}
}
where $u(y,d)$ and $u(y,-\hat{x})$ are the solutions to \eqref{eq:BVP} with incident directions $d$ and $-\hat{x} \in \mathbb{S}^1$, respectively. Applying the boundary conditions on $\partial D$ leads to the desired result.
\end{proof} 

Note that this reciprocity result also holds for the well-known \textit{clamped} and \textit{free plate} boundary conditions. With this result, we are ready to analyze the operator $\mathcal{F}$ given in \eqref{far-field} and obtain its factorization.

\subsection{Factorization of the far-field operator}\label{sec:factorization}

To analyze the solvability and stability of equation \eqref{farfield}, we examine a factorization of the far-field operator~$\mathcal{F}$. 
First, we define the \textit{Herglotz wave operator} $ \mathcal{H} : L^2(\mathbb{S}^1) \to H^{3/2}(\partial D)\times H^{-1/2}(\partial D)$:
\begin{equation}
    \mathcal{H}[g] = 
    -\begin{pmatrix}
        v_g \\
        \nu \Delta v_g + (1-\nu) \partial_n^2 v_g
    \end{pmatrix} \bigg|_{\partial D} ,
    \label{Hoperator}
\end{equation}
where $v_g$ is the Herglotz wave function, defined as
\begin{equation}
    v_g(x) = \int_{\mathbb{S}^1} e^{ik x\cdot d}\, g(d)\, \text{d}s(d) .
    \label{Herglotz}
\end{equation} 
We also define the \emph{data-to-pattern operator} $ \mathcal{G} : H^{3/2}(\partial D) \times H^{-1/2}(\partial D) \to L^2(\mathbb{S}^1)$ which maps supported plate boundary data to the far-field pattern of the scattered field $u^s$:
\begin{equation}
    \mathcal{G}[ (f_1, f_2)^T ](\hat{x}) = u^\infty(\hat{x}),
    \label{Goperator}
\end{equation}
In \cite{biwellposed}, solutions to the boundary value problem \eqref{direct1}-\eqref{direct2} were shown to be unique for $u^s \in H^2_{\text{loc}}(\mathbb{R}^2 \setminus \overline{D})$ and for boundary data $(f_1,f_2)\in H^{3/2}(\partial D)\times H^{-1/2}(\partial D)$. Therefore, the data-to-pattern operator $\mathcal{G}$ is well-defined. Then, by the linearity of the boundary value problem, the far-field operator $\mathcal{F}$ can be written as  
\begin{equation}
    \mathcal{F} = \mathcal{G} \mathcal{H}.
    \label{Ffactorization}
\end{equation}
 This decomposition allows us to describe the existence of solutions to \eqref{farfield} in terms of the range of the operator~$\mathcal{G}$. We now present a lemma that establishes a necessary condition for the existence of a solution to \eqref{farfield}: 

\begin{lemma}
\label{RangeG}
Let $\mathcal{G}$ be the data-to-pattern operator defined in \eqref{Goperator}. 
Then $
    \phi_z \in \mathrm{Range}(\mathcal{G})
     $ if and only if $
    z \in D$.
\end{lemma}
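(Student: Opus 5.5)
The proof follows the standard argument for the acoustic linear sampling method, adapted to the biharmonic setting, and treats the two implications separately.

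\emph{Case $z \in D$.} We exhibit the boundary data explicitly. Since $z \notin \mathbb{R}^2\setminus\overline{D}$, the function $\Phi(\cdot,z)$ is smooth in a neighborhood of $\mathbb{R}^2\setminus D$. It satisfies $\Delta^2\Phi - k^4\Phi = 0$ there, together with the radiation condition \eqref{direct2}. We set
\[
f_1 := \Phi(\cdot,z)|_{\partial D}, \qquad f_2 := M[\Phi(\cdot,z)]|_{\partial D}.
\]
Both traces are smooth, so $(f_1,f_2)^T \in H^{3/2}(\partial D)\times H^{-1/2}(\partial D)$. By the uniqueness result of \cite{biwellposed}, the radiating solution with this boundary data is $\Phi(\cdot,z)$ itself; the sign convention in \eqref{direct1} only flips the sign of the data. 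Hence $\mathcal{G}[(f_1,f_2)^T] = \Phi^\infty(\cdot,z) = \phi_z$, and so $\phi_z \in \mathrm{Range}(\mathcal{G})$.

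\emph{Case $z \notin D$.} We argue by contradiction. Suppose $\phi_z = \mathcal{G}[(f_1,f_2)^T]$, and let $w \in H^2_{\mathrm{loc}}(\mathbb{R}^2\setminus\overline{D})$ be the corresponding radiating solution. Then $w$ and $\Phi(\cdot,z)$ have the same far-field pattern.

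The key step is a Rellich-type lemma for the biharmonic wave equation. Any radiating solution $v$ of $\Delta^2 v - k^4 v = 0$ splits as $v = v_H + v_M$, where
\[
v_H = -\frac{1}{2k^2}(\Delta - k^2)v, \qquad v_M = \frac{1}{2k^2}(\Delta + k^2)v.
\]
Here $v_H$ solves the Helmholtz equation $(\Delta + k^2)v_H = 0$ and $v_M$ solves the modified Helmholtz equation $(\Delta - k^2)v_M = 0$. The component $v_M$ decays exponentially, so the far-field pattern of $v$ equals that of $v_H$. Applying this to $v = w - \Phi(\cdot,z)$: its far-field pattern vanishes, so Rellich's lemma gives $v_H = 0$ outside a large ball. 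The radiation condition forces the modified-Helmholtz component to be the exponentially decaying one, and a decaying solution of the modified Helmholtz equation in an exterior domain vanishes. Hence $v = 0$ outside a large ball. By unique continuation for the fourth-order elliptic operator, this gives $w = \Phi(\cdot,z)$ in $\mathbb{R}^2\setminus(\overline{D}\cup\{z\})$.

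\emph{Sub-case $z \in \mathbb{R}^2\setminus\overline{D}$.} Here $w$ is $H^2$ near $z$. However, $\Phi(\cdot,z)$ contains the term $\frac{1}{8\pi}|x-z|^2\log|x-z|$, whose Hessian has a logarithmic singularity, so $\Phi(\cdot,z) \notin H^2$ near $z$. This contradicts $w = \Phi(\cdot,z)$ near $z$. I should double-check this: near the origin the Hessian of $r^2\log r$ behaves like $\log r$ plus bounded terms, and $\log r$ is square integrable in two dimensions. So $H^2$ regularity alone does not give a contradiction. I would instead use a stronger property: $w$ is real-analytic in the open set $\mathbb{R}^2\setminus\overline{D}$ by elliptic regularity, while $\Phi(\cdot,z)$ is not even $C^2$ at $z$. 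This yields the contradiction.

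\emph{Sub-case $z \in \partial D$.} We approximate $z$ by points $z_n = z + \tfrac{1}{n}n(z)$ lying outside $D$. The data of $w$ are fixed, but $\|\Phi(\cdot,z)\|$ blows up in $H^2$ of a neighborhood of $z$ intersected with the exterior domain. More precisely, I expect the failure to come from the boundary traces: the second derivatives in the bending moment $M$ of $\Phi(\cdot,z)$ behave like $\log|x-z|$ on $\partial D$, so the data fall outside the trace space required by well-posedness, or the $H^2$ regularity up to the boundary fails. This boundary case is the main obstacle. I would first try to show that $\Phi(\cdot,z) \notin H^2(B_\epsilon(z)\setminus\overline{D})$ using the third-derivative behavior ($\sim 1/r$) combined with the $H^{-1/2}$ trace requirement for $M$. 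Failing that, I would fall back on the standard convention of stating the lemma for $z \in \mathbb{R}^2\setminus\overline{D}$, with $\partial D$ handled by a limiting argument.
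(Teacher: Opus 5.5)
Your argument for interior points $z\in D$ is the standard one and is fine. The converse direction, however, has two genuine gaps.

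The first gap is the Rellich step. It is not true that a decaying solution of the modified Helmholtz equation in an exterior domain vanishes: $K_0(k|x-z|)$ is a counterexample, and the radiation condition only excludes the exponentially growing solutions. A vanishing far field therefore gives $v_H=0$, but only $v=v_M$, not $v=0$. This is the same mechanism the paper uses in the proof of Theorem~\ref{Foperatorproperties}, where a zero far field forces $u^s$ to solve the modified Helmholtz equation rather than to vanish.

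Concretely, for $z\in D$ both $\Phi(\cdot,z)$ and $\frac{i}{8k^2}H^{(1)}_0(k|\cdot-z|)$ are radiating solutions outside $\overline D$ with far-field pattern $\phi_z$ but different boundary data. So $\mathcal{G}$ is not injective, and $w=\Phi(\cdot,z)$ cannot be inferred from equal far fields.

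The fix is to compare only Helmholtz components. Rellich's lemma and unique continuation give $w_H=\Phi_H(\cdot,z)$ on $\mathbb{R}^2\setminus(\overline D\cup\{z\})$. Away from $z$ one computes $\Phi_H(\cdot,z)=-\frac{1}{2k^2}(\Delta-k^2)\Phi(\cdot,z)=\frac{i}{8k^2}H^{(1)}_0(k|\cdot-z|)$, which is logarithmically unbounded at $z$. For $z\in\mathbb{R}^2\setminus\overline D$ this contradicts the analyticity of $w_H$ near $z$. In other words, your analyticity idea goes through once it is applied to $w_H$ instead of $w$.

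The second gap is that your boundary sub-case sets out to prove a false statement. For $z\in\partial D$ we have $\Phi(\cdot,z)\in H^{3-\epsilon}_{\mathrm{loc}}(\mathbb{R}^2)$ for every $\epsilon>0$: its second derivatives are $\log|x-z|$ plus bounded terms, which are square integrable, as you noted. Hence its restriction to $\mathbb{R}^2\setminus\overline D$ is a radiating solution that is $H^2$ up to $\partial D$. Its traces satisfy $\Phi(\cdot,z)|_{\partial D}\in H^{5/2-\epsilon}(\partial D)$ and $M[\Phi(\cdot,z)]|_{\partial D}\in H^{1/2-\epsilon}(\partial D)$, since a $\log|s|$ singularity lies in $L^2(\partial D)$.

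By the uniqueness result of \cite{biwellposed}, $\mathcal{G}$ maps these data to $\phi_z$, so $\phi_z\in\mathrm{Range}(\mathcal{G})$ for all $z\in\partial D$. None of your proposed mechanisms can rule this out:
\begin{itemize}
\item a blow-up of the $M$-trace does not occur;
\item $H^2$ regularity up to the boundary does not fail;
\item the $1/r$ third derivatives are invisible to $H^2$;
\item a limiting argument cannot change the conclusion.
\end{itemize}
In the acoustic Dirichlet case boundary points are excluded because $\log|x-z|\notin H^1$ on a sector with vertex $z$. The biharmonic fundamental solution is two orders smoother, so that argument has no analogue here.

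The lemma is nonetheless correct because the paper takes $D$ compact, so $\partial D\subset D$. You should move boundary points into the first case, justified by the $H^2$ regularity of $\Phi(\cdot,z)$ rather than by its smoothness. The second case then concerns only $z\in\mathbb{R}^2\setminus\overline D$, which is exactly what Theorem~\ref{indicator} uses.
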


The proof of this lemma follows as in  \cite{harris2026sampling} and we omit it here for succinctness. Due to the factorization of $\mathcal{F}$, we have $\mathrm{Range}(\mathcal{F}) \subset \mathrm{Range}(\mathcal{G})$. It follows that $\phi_z \in \mathrm{Range}(\mathcal{F})$ only if $z \in D$. We also have the following lemma for the Herglotz wave operator $\mathcal{H}$:

\begin{lemma} \label{lem:Herglotzcompactness}
The Herglotz wave operator 
$\mathcal{H} : L^2(\mathbb{S}^1) \to H^{3/2}(\partial D) \times H^{-1/2}(\partial D)$ 
defined by \eqref{Hoperator} is compact and injective.
\end{lemma}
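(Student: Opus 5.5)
Compactness and injectivity are two independent properties of $\mathcal{H}$, so the plan treats them in turn.

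\emph{Compactness.} The idea is to write $\mathcal{H}$ as an operator with a smooth kernel. Fix a bounded open set $B$ with $\overline{D}\subset B$. For $g\in L^2(\mathbb{S}^1)$, the Herglotz function $v_g$ is an entire solution of the Helmholtz equation $\Delta v + k^2 v = 0$, so it also satisfies $\Delta^2 v_g - k^4 v_g = 0$. Differentiating under the integral sign in \eqref{Herglotz} gives, for every multi-index $\alpha$,
\[
\|\partial^\alpha v_g\|_{L^\infty(B)} \le C_\alpha \|g\|_{L^2(\mathbb{S}^1)},
\]
since the kernel $e^{ikx\cdot d}$ and all its $x$-derivatives are bounded on $\overline{B}\times\mathbb{S}^1$. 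Hence $g\mapsto v_g|_{B}$ is bounded from $L^2(\mathbb{S}^1)$ into $C^m(\overline{B})$ for every $m$, and in particular into $H^{4}(B)$. The trace map $w\mapsto \bigl(w, \nu\Delta w + (1-\nu)\partial_n^2 w\bigr)\big|_{\partial D}$ is bounded from $H^4$ of a neighbourhood of $\partial D$ into $H^{5/2}(\partial D)\times H^{3/2}(\partial D)$. The embedding of this space into $H^{3/2}(\partial D)\times H^{-1/2}(\partial D)$ is compact by Rellich's theorem on the smooth compact curve $\partial D$. Composing a bounded operator with a compact embedding gives compactness of $\mathcal{H}$. (Alternatively, $\mathcal{H}$ is an integral operator with smooth kernel, and Arzel\`a--Ascoli applies.)

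\emph{Injectivity.} Suppose $\mathcal{H}[g]=0$, so that $v_g=0$ and $M[v_g]=0$ on $\partial D$. Since $v_g=0$ on the smooth curve $\partial D$, all its tangential derivatives vanish, and $\Delta v_g = \partial_n^2 v_g + \kappa\,\partial_n v_g + \partial_\tau^2 v_g$ on $\partial D$, where $\kappa$ is the curvature. With $\partial_\tau^2 v_g=0$, the moment condition becomes
\[
\partial_n^2 v_g + \nu\kappa\,\partial_n v_g = 0 .
\]
Using the Helmholtz equation, $\Delta v_g = -k^2 v_g = 0$ on $\partial D$, so $\partial_n^2 v_g = -\kappa\,\partial_n v_g$. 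Combining the two relations yields $(1-\nu)\kappa\,\partial_n v_g = 0$. This does not directly force $\partial_n v_g=0$, because $\kappa$ may vanish on parts of the boundary.

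The cleaner route, which I would take instead, is to view $v_g$ in $D$ as a solution of the interior supported-plate problem. It satisfies $\Delta^2 v - k^4 v = 0$ in $D$ with $v = M[v] = 0$ on $\partial D$. Moreover, because $v_g$ solves the Helmholtz equation with $v_g=0$ on $\partial D$, it is a Dirichlet eigenfunction of $-\Delta$ in $D$ with eigenvalue $k^2$. So if $k^2$ is not a Dirichlet eigenvalue of $D$, then $v_g\equiv 0$ in $D$. By analyticity $v_g\equiv 0$ in $\mathbb{R}^2$, and injectivity of the Herglotz map (Jacobi--Anger expansion) gives $g=0$. I therefore expect to need the standing assumption, as in \cite{harris2026sampling} for the clamped case, that $k$ avoids the interior eigenvalues, here $k^2$ not a Dirichlet eigenvalue of $-\Delta$ in $D$ (equivalently, $k$ not a supported-plate eigenvalue for Helmholtz-type solutions). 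The main obstacle is precisely this point: without such an assumption injectivity fails, since a Dirichlet eigenfunction that extends as a Herglotz function (for instance on a disk) gives a nontrivial kernel. I would state and use this restriction on $k$ explicitly.
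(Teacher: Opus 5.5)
Your compactness argument is fine and is essentially the paper's: smoothness of $v_g$, a trace into a smoother product space, and a compact Sobolev embedding. The gap is in injectivity. The lemma asserts injectivity for every $k>0$, but your ``cleaner route'' proves it only when $k^2$ is not a Dirichlet eigenvalue of $-\Delta$ in $D$. Your claim that injectivity fails without this assumption is wrong. That route never uses the moment condition $M[v_g]=0$, and this condition is exactly what excludes Dirichlet eigenfunctions. Your disk example shows it. For $v=J_0(k|x|)$ with $J_0(kR)=0$, Bessel's equation gives
\[
M[v]=(1-\nu)\,\partial_r^2 v=(1-\nu)\,k\,J_1(kR)/R\neq 0 \quad \text{on } |x|=R,
\]
because $J_0$ and $J_1$ have no common positive zeros. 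So $v$ is not in the kernel of $\mathcal{H}$. The same holds for $J_n(k|x|)e^{in\theta}$.

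The first route you abandoned is the right one, and the paper's proof follows it. From $v_g=0$ and $M[v_g]=0$ on $\partial D$, together with $\Delta v_g=-k^2v_g=0$ and $\Delta=\partial_n^2+\kappa\,\partial_n+\Delta_{\mathrm{surf}}$, one gets $\partial_n^2 v_g=0$ and $\kappa\,\partial_n v_g=0$; this is your $(1-\nu)\kappa\,\partial_n v_g=0$. The paper then concludes $\partial_n v_g=0$ and applies unique continuation. You are right that the paper's step from $\kappa\,\partial_n v_g=0$ to $\partial_n v_g=0$ silently divides by $\kappa$. The fix is elementary and needs no condition on $k$. The curvature $\kappa$ is continuous and cannot vanish identically on a closed curve, since $\int \kappa\,\mathrm{d}s=\pm 2\pi$ on each component of $\partial D$. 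Hence $\kappa\neq 0$ on some open arc $\Gamma$, and there $v_g=\partial_n v_g=0$. Zero Cauchy data on $\Gamma$ forces $v_g\equiv 0$. To see this, extend $v_g|_D$ by zero across $\Gamma$; this gives a distributional Helmholtz solution in a small disk around a point of $\Gamma$. That solution is real-analytic and vanishes on an open set, so $v_g$ vanishes on an open set and therefore on all of $\mathbb{R}^2$. Then $g=0$ by the Jacobi--Anger expansion, as you said.
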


\begin{proof}
The proof follows as in \cite{harris2026sampling} with minor modifications. First, to show compactness, observe that the corresponding Herglotz wave functions are smooth solutions of the Helmholtz equation in $\mathbb{R}^2$. 
Hence, we have $v_g \in H^p_{\mathrm{loc}}(\mathbb{R}^2)$ for any $p > 0$ and $\mathcal{H}[g] \in H^{p-1/2}(\partial D) \times H^{p-5/2}(\partial D)$ for any $p > 0$. The compactness of $\mathcal{H}$ into $ H^{3/2}(\partial D) \times H^{-1/2}(\partial D) $ then follows directly from standard Sobolev embedding theorems. To prove injectivity, assume that $\mathcal{H} [g] = 0$. 
This implies that the associated Herglotz wave function satisfies
$v_g = 0$ and $ M[v_g] =-\nu k^2 v_g+(1-\nu)\partial_n^2v_g=(1-\nu)\partial_n^2v_g = 0$ on $\partial D$. We now make use of the standard decomposition of the Laplacian in $\mathbb{R}^2$ restricted to a smooth boundary,
\begin{equation}
\label{decomposition_lap}
\Delta = \partial_n^2 + \kappa\,\partial_n + \Delta_{\mathrm{surf}},
\end{equation}
where $\kappa$ denotes the curvature of $\partial D$ and $\Delta_{\mathrm{surf}}$ is the surface Laplacian; see \cite{Reilly1977}. Since $v_g$ vanishes identically on $\partial D$, it follows that $\Delta_{\mathrm{surf}} v_g = 0$ on $\partial D$. 
Combining this with the condition $\partial_n^2 v_g = 0$ and the decomposition \eqref{decomposition_lap}, we have
$\partial_n v_g = 0$ on $\partial D$. By the unique continuation property for solutions of the Helmholtz equation, it follows that $v_g = 0$ in $\mathbb{R} \setminus \overline{D}$. Consequently, $g = 0$, establishing the injectivity of $\mathcal{H}$.
\end{proof}

Next, we present a result concerning a key analytical property of the far-field operator. Combining the previous lemma with the reciprocity principle \Cref{secondreciprocity}, we arrive at the following result:

\begin{theorem}
\label{Foperatorproperties}
Assume that $k>0$ is not a supported plate transmission eigenvalue, defined as $k$ such that there exists a nontrivial $(p, q) \in H^1(\mathbb{R}^2 \setminus \overline{D}) \times H^1(D)$ satisfying  
\begin{equation}
\begin{cases}
    \Delta p - k^2 p = 0, & \quad \mathrm{ in } \ \mathbb{R}^2 \setminus \overline{D}, \\ 
    \Delta q + k^2 q = 0, & \quad \mathrm{ in } \ D, \\ 
    p + q = 0,  & \quad \mathrm{ on } \ \partial D, \\
   M[p + q]  = 0, & \quad \mathrm{ on } \ \partial D, \\    
\end{cases}
\label{eigenvalueproblem}
\end{equation}
where $p$ decays exponentially as $|x| \to \infty$.    
Then the far-field operator 
$\mathcal{F} : L^2(\mathbb{S}^1) \to L^2(\mathbb{S}^1)$ 
is compact, injective, and has dense range.
\end{theorem}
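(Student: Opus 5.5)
Compactness follows directly from the factorization \eqref{Ffactorization}. By \Cref{lem:Herglotzcompactness}, $\mathcal{H}$ is compact. The data-to-pattern operator $\mathcal{G}$ is bounded: well-posedness of \eqref{direct1}--\eqref{direct2} from \cite{biwellposed} gives a bounded solution map from $H^{3/2}(\partial D)\times H^{-1/2}(\partial D)$ into $H^2_{\mathrm{loc}}$. The far-field pattern then depends continuously on the Cauchy data on a circle enclosing $D$; this can be seen from the Green representation obtained from \eqref{eq:IBP} with $\Phi$. Hence $\mathcal{F}=\mathcal{G}\mathcal{H}$ is compact.

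For injectivity, assume $\mathcal{F}[g]=0$. Then the scattered field generated by the incident Herglotz function $v_g$ has zero far-field pattern. Write the biharmonic scattered field as $u^s = u_H + u_M$, where
\[
u_H = -\tfrac{1}{2k^2}(\Delta - k^2)u^s, \qquad u_M = \tfrac{1}{2k^2}(\Delta + k^2)u^s .
\]
Here $u_H$ solves the Helmholtz equation and radiates, while $u_M$ solves the modified Helmholtz equation $\Delta u_M - k^2u_M=0$ and decays exponentially. The far-field pattern of $u^s$ equals that of $u_H$. By Rellich's lemma and unique continuation, $u_H=0$ in $\mathbb{R}^2\setminus\overline{D}$. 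Set $p:=u_M$ in $\mathbb{R}^2\setminus\overline D$ and $q:=v_g$ in $D$; note $v_g$ solves $\Delta q+k^2q=0$ in all of $\mathbb{R}^2$. The boundary conditions in \eqref{direct1} read $u^s+v_g=0$ and $M[u^s+v_g]=0$ on $\partial D$. These become exactly $p+q=0$ and $M[p+q]=0$ in \eqref{eigenvalueproblem}, with the traces of $q$ taken from the smooth extension $v_g$. Since $k$ is not a supported plate transmission eigenvalue, $p=0$ and $q=0$. Hence $v_g=0$ in $D$, so $v_g\equiv0$ by analyticity, and therefore $g=0$ by the standard injectivity of the Herglotz map (Jacobi--Anger expansion). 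The same conclusion also follows from \Cref{lem:Herglotzcompactness}.

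For dense range, I would use reciprocity (\Cref{secondreciprocity}) to relate $\mathcal{F}^*$ to $\mathcal{F}$. Define $(Rg)(d)=\overline{g(-d)}$. From $u^\infty(\hat x,d)=u^\infty(-d,-\hat x)$ one computes
\[
(\mathcal{F}^*h)(d)=\int_{\mathbb{S}^1}\overline{u^\infty(\hat x,d)}\,h(\hat x)\,\mathrm{d}s(\hat x),
\]
and after the substitutions $\hat x\mapsto -\hat x$, $d\mapsto -d$ this gives $\mathcal{F}^* = R\mathcal{F}R$. Because $R$ is a bijective isometry, $\mathcal{F}^*$ is injective whenever $\mathcal{F}$ is. Then $\overline{\mathrm{Range}(\mathcal{F})}=\mathcal{N}(\mathcal{F}^*)^\perp = L^2(\mathbb{S}^1)$.

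The main obstacle is the injectivity step. One must verify carefully that the splitting of $u^s$ into Helmholtz and modified-Helmholtz parts is legitimate, with the right regularity ($H^1$ versus $H^2_{\mathrm{loc}}$), and that the two boundary conditions transfer exactly to \eqref{eigenvalueproblem}. In particular, $M[p+q]$ involves second normal derivatives, so the traces must be interpreted in the appropriate weak sense. I would first check that $u^s=u_M$ outside $D$ once $u_H$ vanishes, which makes the substitution into the boundary conditions immediate.
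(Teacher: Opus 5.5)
Your proposal is correct and follows the same route as the paper. Compactness comes from $\mathcal{F}=\mathcal{G}\mathcal{H}$ with $\mathcal{H}$ compact; injectivity comes from showing that a vanishing far field forces $u^s$ to solve the modified Helmholtz equation, so that $(u^s, v_g)$ would be a supported plate transmission eigenpair; and dense range comes from using reciprocity to show $\mathcal{F}^*$ is injective. Your additions (boundedness of $\mathcal{G}$, the explicit splitting $u^s=u_H+u_M$ with Rellich's lemma, and the identity $\mathcal{F}^*=R\mathcal{F}R$) make explicit what the paper only sketches.
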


\begin{proof}
The compactness of $\mathcal{F}$ follows directly from \Cref{lem:Herglotzcompactness}. The proof of injectivity follows as in \cite{harris2026sampling} but we provide a sketch of it here for completeness. Suppose $\mathcal{F}[g] = 0$. Recall that $\mathcal{F}[g](\hat{x})$ is the far-field pattern of the solution $u^s$ for an incident field $u^i = v_g$. For the far-field pattern to be zero, $u^s$ must satisfy the modified Helmholtz equation in $\mathbb{R}^2 \setminus \overline{D}$. Meanwhile, $v_g$ satisfies the Helmholtz equation in $D$. This implies that $(u^s, v_g)$ solves the supported plate transmission eigenvalue problem, which by assumption can only be true if $u^s = v_g = 0$. Therefore, $g = 0$. Finally, to see that $\mathcal{F}$ has dense range, we can use the reciprocity relation to write the adjoint of $\mathcal{F}$ as in \cite{harris2026sampling}:
$$ \mathcal{F}^*[g](\hat{x}) = \int_{\mathbb{S}^1} \overline{u^\infty (-\hat{x},-d)} g(d) \dd s(d) $$
$\mathcal{F}^*$ must also be injective by the same argument as above, and therefore $\overline{\textrm{Range}(\mathcal{F}) } = L^2(\mathbb{S}^1)$. 
\end{proof}

Given the result in Theorem~\ref{Foperatorproperties}, it becomes clear that due to the compactness of the far-field operator $\mathcal{F}$, problem \eqref{farfield} is ill-posed. To mitigate the ill-posedness of the problem, we need to apply some regularization method \cite{engl1996regularization}. In this paper, we will use Tikhonov regularization\, where the problem being solved becomes the minimization of 
\begin{equation}\label{eq:farfieldeq_regularized}
    \| \mathcal{F} [g_z] - \phi_z \|_{L^2(\mathbb{S}^1)}^2 +\alpha \|g_z \|_{L^2(\mathbb{S}^1)}^2,
\end{equation}
where $\alpha>0$ is the regularization parameter. In this case, we will define $g_z^{\alpha}$ as the minimizer of \eqref{eq:farfieldeq_regularized}.

The following result describes the behavior of the indicator function in the LSM as the regularization parameter $\alpha$ goes to zero. In essence, it shows that the approximate solution to the far-field equation \eqref{farfield} remains bounded when the sampling point lies within the scatterer~$D$, which serves as a practical computational criterion for reconstructing the scatterer from the far-field data. 

\begin{theorem}
\label{indicator}
Assume that the wavenumber $k$ is not an eigenvalue of the supported transmission eigenvalue problem given in \eqref{eigenvalueproblem}. Then, for any $z \in \mathbb{R}^2 \setminus \overline{D}$ and a family of functions $\{ g_z^{\alpha} \} \subset L^2(\mathbb{S}^1)$ satisfying
\begin{equation} \label{eq:Flimit}
    \lim_{\alpha \to 0} 
    \| \mathcal{F} [g_z^{\alpha}] - \phi_z \|_{L^2(\mathbb{S}^1)} = 0,
\end{equation}
it follows that
\begin{equation*}
    \lim_{\alpha \to 0} 
    \| g_z^{\alpha} \|_{L^2(\mathbb{S}^1)} = \infty.
\end{equation*}
\end{theorem}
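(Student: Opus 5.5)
We argue by contradiction, in the standard way for the linear sampling method, using the factorization $\mathcal{F}=\mathcal{G}\mathcal{H}$ from \eqref{Ffactorization} together with \Cref{RangeG}. Fix $z\in\mathbb{R}^2\setminus\overline{D}$ and suppose that $\{g_z^\alpha\}$ satisfies \eqref{eq:Flimit} but that $\|g_z^\alpha\|_{L^2(\mathbb{S}^1)}$ does not tend to infinity. Then there is a sequence $\alpha_n\to 0$ along which $\|g_z^{\alpha_n}\|_{L^2(\mathbb{S}^1)}\le C$ for some constant $C$.

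Since $L^2(\mathbb{S}^1)$ is a Hilbert space, we can pass to a subsequence (not relabeled) with $g_z^{\alpha_n}\rightharpoonup g$ weakly in $L^2(\mathbb{S}^1)$. By \Cref{lem:Herglotzcompactness}, $\mathcal{H}$ is compact, so $\mathcal{H}[g_z^{\alpha_n}]\to\mathcal{H}[g]$ strongly in $H^{3/2}(\partial D)\times H^{-1/2}(\partial D)$. Next we need $\mathcal{G}$ to be bounded, which I expect to follow from the well-posedness result of \cite{biwellposed}. That result gives uniqueness; existence and continuous dependence of $u^s\in H^2_{\mathrm{loc}}$ on the boundary data should come from the Fredholm boundary-integral formulation of \cite{nekrasov2025boundary,farkas}. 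The far-field pattern then depends continuously on $u^s$ through its integral representation over a circle enclosing $D$. Granting this, $\mathcal{F}[g_z^{\alpha_n}]=\mathcal{G}\mathcal{H}[g_z^{\alpha_n}]\to\mathcal{G}\mathcal{H}[g]=\mathcal{F}[g]$ strongly in $L^2(\mathbb{S}^1)$. Even without the compactness of $\mathcal{H}$, weak continuity of the bounded operator $\mathcal{F}$ would already suffice: it gives $\mathcal{F}[g_z^{\alpha_n}]\rightharpoonup\mathcal{F}[g]$, and weak limits are unique.

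Combining this convergence with \eqref{eq:Flimit} gives $\mathcal{F}[g]=\phi_z$. Hence $\phi_z=\mathcal{G}[\mathcal{H}g]\in\mathrm{Range}(\mathcal{G})$. By \Cref{RangeG} this forces $z\in D$, which contradicts $z\in\mathbb{R}^2\setminus\overline{D}$. Therefore $\|g_z^\alpha\|_{L^2(\mathbb{S}^1)}\to\infty$.

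The non-eigenvalue assumption on $k$ is not used directly here. It enters through \Cref{Foperatorproperties}, which guarantees dense range, so that families $\{g_z^\alpha\}$ satisfying \eqref{eq:Flimit} (for instance, Tikhonov minimizers) actually exist. The main obstacle is the boundedness of $\mathcal{G}$, since the paper only cites uniqueness. If an a priori estimate is not readily available, I would obtain it by a Fredholm alternative argument: combine uniqueness with the Fredholm property of the integral-equation formulation of the supported plate problem.
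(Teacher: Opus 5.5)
Your proof is correct and is essentially the paper's argument: you extract a bounded, hence weakly convergent, subsequence and use compactness to obtain $\mathcal{F}[g]=\phi_z$, then contradict \Cref{RangeG}. The paper cites compactness of $\mathcal{F}$ directly, which amounts to your compact $\mathcal{H}$ composed with a bounded $\mathcal{G}$, and your negation of the limit via a bounded sequence $\alpha_n\to 0$ is slightly more careful than the paper's.

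The paper also assumes the boundedness of $\mathcal{G}$ that you flag, only implicitly, but your fallback of combining uniqueness with the Fredholm property of $\mathcal{Q}$ gives invertibility only where $\mathcal{Q}$ is injective (the paper has $\mathcal{Q}^{-1}$ only for almost all $k$), so a variational Fredholm formulation of the exterior problem, where uniqueness does imply invertibility, is the safer source for that estimate.
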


\begin{proof}
The proof follows by an argument similar to \cite{harris2026sampling}; we outline the argument here for completeness and readability. Because $k$ is not a supported transmission eigenvalue, $\mathcal{F}$ is injective, compact, and has dense range. In particular, we know that for $\phi_z \in L^2(\mathbb{S}^1)$ such a family $\{g_z^\alpha\}$ satisfying \eqref{eq:Flimit} exists and can be given as the family of functions that minimizes \eqref{eq:farfieldeq_regularized}.

    Now, suppose by contradiction that $\| g_z^\alpha \|_{L^2(\mathbb{S}^1)}$ remains bounded as $\alpha \to 0$ for some $z \in \mathbb{R}^2 \setminus \overline{D}$. Then, there exists a subsequence $\{ g_z^{\alpha_i} \}^\infty_{i=0} $ such that $ g_z^{\alpha_i} \rightharpoonup g_z $ weakly in  $L^2(\mathbb{S}^1)$ as $i \to \infty$. Since $\mathcal{F}$ is compact, $\lim_{i \to \infty} \mathcal{F}[g_z^{\alpha_i}] = \mathcal{F}[g_z]$ strongly in $L^2(\mathbb{S}^1)$. This implies that there exists a $g_z \in L^2 (\mathbb{S}^1)$ such that $  \mathcal{F}[g_z] = \phi_z $
    for $z \in \mathbb{R}^2 \setminus \overline{D}$. However, this is a contradiction of \Cref{RangeG}, therefore $g_z^\alpha$ is unbounded.
\end{proof}

Finally, the LSM indicator function is defined as
\begin{equation}\label{eq:lsm-indicator}
\mathcal{I}_{\mathrm{LSM}}(z)
=
\frac{1}{\|g_z^\alpha\|_{L^2(S^1)}}.
\end{equation}
By Theorem~\ref{indicator}, $\|g_z^\alpha\|_{L^2(S^1)}$
blows up as $\alpha \to 0$ whenever $z \notin D$, and therefore
the indicator function tends to zero for sampling points $z$ outside the cavity. For $z \in D$, approximate solutions with bounded norm exist, but no uniform
lower bound for $\mathcal{I}_{\mathrm{LSM}}(z)$ can be guaranteed. Thus, the LSM characterizes the cavity $D$ through the growth
behavior of the regularized solutions $g_z^\alpha$.

\section{Direct sampling method}\label{sec:DSM}

While the linear sampling method provides one approach for the qualitative reconstruction of obstacles, it requires solving a large number of ill-posed equations. To address this limitation, we introduce the direct sampling method, which achieves similar reconstruction capabilities with a reduced computational bottleneck. The goal of the direct sampling method is to apply the far-field operator to the far-field pattern of a point source to compute the following two indicator functions:
\begin{align}
    \Ione (z) \coloneqq \big| \langle \phi_z, \mathcal{F} [\phi_z ] \rangle_{L^2(\mathbb{S}^1)} \big|^{\rho/2}, \quad \text{ and } \quad
    \Itwo (z) \coloneqq \| \mathcal{F}[\phi_z] \|^\rho_{L^2(\mathbb{S}^1)},       \label{eq:imaging-functions-DSM2} 
   \end{align}
where $\phi_z(d) = e^{-ik z \cdot d}$ is the far-field pattern of the fundamental solution (up to some rescaling) centered at a point $z$ and $\rho > 0$ is a regularization parameter used to refine the resolution of the imaging functionals.

 While direct sampling approaches have been widely explored for acoustic scattering problems (see, for instance, \cite{li2012direct}), their applicability to biharmonic equations has only recently been explored for the clamped plate problem \cite{HLL-biharmonic}, and our analysis follows closely. The primary theoretical results concern the factorization of the forward operator $\mathcal{G}$, which allows us to predict the decay of the indicator functions above.

\subsection{Integral formulation for the supported plate}\label{sec:supported_IE}

 Adopting the formulation for the supported plate problem appearing in \cite{farkas,nekrasov2025boundary, agocs2026integral}, we write the scattered field $u^s$ in terms of boundary densities $\varphi_1$ and $\varphi_2$: 
\begin{align}
    u^s(x) &= \int_{\partial D} \big( \partial_{n_y}^3 \Phi (x,y) + \alpha_1 \partial_{n_y} \partial_{\tau_y}^2 \Phi (x,y) + \alpha_2 \kappa(y) \partial_{n_y}^2 \Phi (x,y) \nonumber \\
    & \qquad + \alpha_3 \kappa'(y) \partial_{\tau_y} \Phi (x,y) \big) \varphi_1(y)  \, \dd s_y  + \int_{\partial D} \partial_{n_y} \Phi (x,y) \varphi_2(y)  \, \dd s_y , \label{scatteredus}
\end{align}
where $\kappa(y)$ is the curvature at the point $y \in \partial D$, $\kappa'(y)$ is the arc-length derivative of the curvature, and $\partial_{n_y}$ and $\partial_{\tau_y}$ represent the normal and tangential derivatives at $y$, respectively. The coefficients $\alpha_1,\alpha_2,$ and $\alpha_3$ are given by: 
\begin{equation*}
    \alpha_1 = 2- \nu \, , \ \ \alpha_2 = \frac{(-1+\nu)(7+\nu)}{3-\nu} \, , \ \  \alpha_3 = \frac{(1-\nu)(3+\nu)}{1+\nu} \, .
\end{equation*}
Applying the supported plate boundary conditions to $u = u^s + u^i$ leads to the following system of boundary integral equations derived in \cite{nekrasov2025boundary,agocs2026integral}:
\begin{equation}\label{eq:supported_IE}
    \begin{pmatrix}
        -\frac12 I + \mathcal{K}_{11} & \mathcal{K}_{12} \\
        c_0 \kappa^2 I + \mathcal{K}_{21} &  - \frac12 I + \mathcal{K}_{22}
    \end{pmatrix} \begin{pmatrix}
        \varphi_1 \\ \varphi_2
    \end{pmatrix} = - \begin{pmatrix}
        u^i \\ M[u^i] 
    \end{pmatrix},
\end{equation}
where $ c_0 =  \frac{(\nu-1)(\nu+3)(2\nu-1)}{2(3-\nu)} $ and $\mathcal{K}_{11},\mathcal{K}_{12},\mathcal{K}_{21},$ and $\mathcal{K}_{22}$ are boundary integral operators with the following kernels:
\begin{align*}
    K_{11}(x,y) &=  \left[\partial_{n_y}^3 + \alpha_1 \partial_{n_y} \partial_{\tau_y}^2  + \alpha_2 \kappa(y) \partial_{n_y}^2
    + \alpha_3 \kappa'(y) \partial_{\tau_y}\right] \Phi(x,y) \, , \\
    K_{12}(x,y) &= \partial_{n_y} \Phi(x,y) \, , \\
    K_{21}(x,y) &=  \left[\partial_{n_x}^2 \partial_{n_y}^3  + \alpha_1 \partial_{n_x}^2 \partial_{n_y} \partial_{\tau_y}^2 + \alpha_2 \kappa(y) \partial_{n_x}^2 \partial_{n_y}^2  \right.   + \alpha_3 \kappa'(y) \partial_{n_x}^2 \partial_{\tau_y}   \\
    &\quad  +\nu \partial_{\tau_x}^2 \partial_{n_y}^3 +  \nu \alpha_1 \partial_{\tau_x}^2 \partial_{n_y} \partial_{\tau_y}^2  + \nu \alpha_2 \kappa(y) \partial_{\tau_x}^2 \partial_{n_y}^2
    \left.+ \nu \alpha_3 \kappa'(y) \partial_{\tau_x}^2 \partial_{\tau_y} \right]\Phi(x,y) \, , \\
    K_{22}(x,y) &= \left[\partial_{n_x}^2 \partial_{n_y}+ \nu \partial_{\tau_x}^2 \partial_{n_y}\right] \Phi(x,y) \, , 
\end{align*}

Next, we have the following statement about the integral equation above:

\begin{theorem} \label{thm:fredholm}
    Equation \eqref{eq:supported_IE} is Fredholm second kind on $H^{3/2}(\partial D) \times H^{-1/2}(\partial D)$. 
\end{theorem}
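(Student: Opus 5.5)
We will show that the operator matrix in \eqref{eq:supported_IE} splits as $\mathcal{A} = \mathcal{D} + \mathcal{C}$. Here $\mathcal{D}$ is a boundedly invertible (block-triangular) part, and $\mathcal{C}$ is compact on the natural density space. The right spaces for the densities need care. The statement refers to $H^{3/2}(\partial D)\times H^{-1/2}(\partial D)$, the data space. The ``second kind'' structure means the diagonal identities act on $\varphi_1$ and $\varphi_2$ themselves. We therefore take $\varphi_1 \in H^{3/2}(\partial D)$ and $\varphi_2 \in H^{-1/2}(\partial D)$, matching the orders of the rows $u^i|_{\partial D}$ and $M[u^i]|_{\partial D}$. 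We then check that each $\mathcal{K}_{ij}$ maps $\varphi_j$'s space into row $i$'s space, and that it is compact there or has a principal symbol absorbed into the diagonal.

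The main tool is pseudodifferential symbol calculus on the smooth closed curve $\partial D$. We split $\Phi = \Phi_0 + \Phi_1$, where $\Phi_0(x,y) = -\frac{1}{8\pi k^2}\,|x-y|^2\log|x-y|$ is the biharmonic kernel; this is the leading singularity of $\frac{i}{8k^2}(H_0^{(1)}(kr)-H_0^{(1)}(ikr))$. The remainder $\Phi_1$ is smoother by two orders, of the form $r^4\log r$ plus smooth terms. Each kernel $K_{ij}$ is a sum of derivatives of $\Phi$. Derivatives of order $m$ of $r^2\log r$ give operators of order $m-4$ on the curve, and the same derivatives of $\Phi_1$ give order $m-6$.
\begin{itemize}
\item $K_{11}$ has three derivatives, so its order is $\le -1$. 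The combination with $\alpha_1$ and $\alpha_2$ is chosen precisely so that the jump of the order-$(-1)$ part produces the $-\frac12 I$ and a compact remainder (order $\le -1$, smoothing).
\item $K_{12}$ has one derivative, order $-3$, and maps $H^{-1/2}\to H^{5/2}$. It is compact into $H^{3/2}$.
\item $K_{22}$ has three derivatives, order $-1$ plus the jump $-\frac12 I$. This follows the classical double-layer-type analysis with $\partial_{n_x}^2+\nu\partial_{\tau_x}^2$ applied.
\item $K_{21}$ has five derivatives and order $+1$. Mapping $H^{3/2}\to H^{1/2}$, it is bounded but not compact into $H^{-1/2}$. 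It is harmless because the system is block lower-triangular modulo compacts.
\end{itemize}

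Concretely, write
\[
\mathcal{A} = \begin{pmatrix} -\tfrac12 I & 0 \\ c_0\kappa^2 I + \mathcal{K}_{21}^{(0)} & -\tfrac12 I\end{pmatrix} + \mathcal{C},
\]
where $\mathcal{K}_{21}^{(0)}$ is the principal (order $\le 2$ in the worst case) part of $\mathcal{K}_{21}$. The triangular part is invertible on $H^{3/2}\times H^{-1/2}$ as long as $\mathcal{K}_{21}^{(0)}: H^{3/2}\to H^{-1/2}$ is bounded, which requires order $\le 2$. Its explicit inverse is
\[
\begin{pmatrix} -2I & 0\\ -4(c_0\kappa^2+\mathcal{K}_{21}^{(0)}) & -2I\end{pmatrix}.
\]
Hence $\mathcal{A}$ is an invertible operator plus a compact one, which is Fredholm of index zero. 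This is the sense of ``second kind'' used in \cite{farkas,nekrasov2025boundary,agocs2026integral}.

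\textbf{Main obstacle.} The hard step is the symbol computation showing that the hypersingular pieces cancel. Individually, $\partial_{n_y}^3\Phi$ in $K_{11}$ and $\partial_{n_x}^2\partial_{n_y}$ in $K_{22}$ contain order-$0$ (Cauchy/jump) and positive-order singular parts. The choice of $\alpha_1=2-\nu$ and $\alpha_2$ must cancel the nonlocal order-$0$ symbol, leaving exactly $\pm\frac12 I$ plus compact terms. The curvature terms with $\alpha_2,\alpha_3,c_0$ must in turn cancel the subprincipal corrections that arise on a curved boundary. We would first compute the principal symbols on a flat boundary via Fourier transform in the tangential variable, using $\widehat{\Phi_0}\sim|\xi|^{-4}$. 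For example, $\partial_{n_y}^3$ and $\partial_{n_y}\partial_{\tau_y}^2$ give limits $\mp\frac12 + c\,(1-\alpha_1/\ldots)$. We would then handle curvature by expanding in local coordinates, citing the detailed expansions in \cite{nekrasov2025boundary,agocs2026integral} for the lower-order curvature terms rather than redoing them.
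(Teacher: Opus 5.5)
Your block-triangular device is sound, and it is slightly more economical than the paper's route. The paper's lemma asserts that all four $\mathcal{K}_{ij}$ are compact; for $\mathcal{K}_{21}$ this rests on the hypersingular cancellation taken from \cite{nekrasov2025boundary}. Since $c_0\kappa^2 I$ is compact $H^{3/2}(\partial D)\to H^{-1/2}(\partial D)$ by the compact embedding, the paper's operator is literally $-\frac12 I$ plus a compact operator. Your explicit triangular inverse only needs $\mathcal{K}_{21}$ to be \emph{bounded}, i.e.\ of order at most $2$, which is its true generic order; that suffices for Fredholmness of index zero. The gap lies in the other three blocks. Your argument does need them compact, in particular $\mathcal{K}_{12}$: if it were merely bounded, neither off-diagonal block would be compact and the triangular structure would be lost. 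There your justification does not hold up.

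Your rule that $m$ derivatives of $r^2\log r$ give order $m-4$ on the curve is off by one. Restricting the volume symbol $|\xi|^{-4}$ to the curve integrates out the normal frequency, $\int|\xi|^{-4}\,d\xi_n\sim|\xi_\tau|^{-3}$, so the biharmonic single layer has order $-3$ (the $H^{-1/2}\to H^{5/2}$ mapping the paper quotes), and $m$ derivatives give order $m-3$. Generically, then:
\begin{itemize}
\item three derivatives give order $0$, which is not compact on $H^{3/2}$ or on $H^{-1/2}$;
\item one derivative gives order $-2$, which is bounded but not compact $H^{-1/2}\to H^{3/2}$.
\end{itemize}
In each case you sit exactly on the borderline. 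Conversely, an order-$1$ operator \emph{would} be compact $H^{3/2}\to H^{-1/2}$, contrary to your remark on $K_{21}$.

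The missing idea, which is what the paper uses, is this: every kernel in $K_{11}$, $K_{12}$, $K_{22}$ carries a normal derivative $\partial_{n_y}$, and $n_y\cdot(x-y)=O(|x-y|^2)$ on a smooth curve. With $w=x-y$ and $r=|w|$, one has up to sign
$\partial_a\partial_b\partial_c(r^2\log r)=2[(a\cdot b)(c\cdot w)+(a\cdot c)(b\cdot w)+(b\cdot c)(a\cdot w)]/r^2-4(a\cdot w)(b\cdot w)(c\cdot w)/r^4$.
For $c=n_y$ and $a=b\in\{n_y,\tau_y,n_x,\tau_x\}$, every term is $C^\infty$ on $\partial D\times\partial D$; by contrast, $\partial_{\tau_y}^3$ would give a Cauchy kernel $\sim 2/s$. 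Likewise $\partial_{n_y}(r^2\log r)=-(n_y\cdot w)(2\log r+1)$ behaves like $\kappa s^2\log|s|$, the singularity of the biharmonic single layer, and so has order $-3$. In symbol language, the would-be principal symbol is odd in $\xi_n$, so its on-boundary value vanishes and only the $\pm\frac12$ jump survives.

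This also shows your ``main obstacle'' is misplaced. There is no nonlocal order-$0$ or positive-order part of $K_{11}$ or $K_{22}$ to cancel, and the jump is $\pm\frac12$ independently of $\alpha_1$. The $\nu$-dependent choice $\alpha_1=2-\nu$ (together with $\alpha_2,\alpha_3$) matters only in the $(2,1)$ block, where $M$ acts, and that is exactly the block your triangular argument lets you avoid.

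Two minor corrections. The leading kernel is $\frac{1}{8\pi}r^2\log r$, with no factor $k^{-2}$. The $r^4\log r$ terms cancel between $H_0^{(1)}(kr)$ and $H_0^{(1)}(ikr)$, so the next singular term is $r^6\log r$.
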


Before proving the above theorem, we require the following lemma.

\begin{lemma}
    The operator $\mathcal{K}_{11}$ is compact from $ H^{3/2}(\partial D) \to H^{3/2} (\partial D)$, $\mathcal{K}_{12}$ is compact from $ H^{-1/2}(\partial D) \to H^{3/2} (\partial D)$, $\mathcal{K}_{21}$ is compact from $ H^{3/2}(\partial D) \to H^{-1/2} (\partial D)$, and $\mathcal{K}_{22}$ is compact from $ H^{-1/2}(\partial D) \to H^{-1/2} (\partial D)$.
\end{lemma}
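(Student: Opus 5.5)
The plan is to treat each $\mathcal{K}_{ij}$ as a pseudodifferential operator on the smooth closed curve $\partial D$ and compute its principal order. For each block we then check that the order is strictly less than the order needed to map the indicated domain space into the indicated target space. The key structural fact is the splitting
\[
\Phi(x,y) = \frac{i}{8k^2}\bigl(H_0^{(1)}(k|x-y|) - H_0^{(1)}(ik|x-y|)\bigr) = -\frac{1}{8\pi k^2}|x-y|^2\log|x-y| + \text{(smoother terms)}.
\]
The logarithmic singularities of the two Hankel functions cancel, leaving the biharmonic-type singularity $r^2\log r$ plus terms of the form $r^{2m}\log r$ with $m\ge 2$ and analytic terms. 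So $\Phi$ behaves like the Green's function of $\Delta^2$, whose single layer on $\partial D$ is a pseudodifferential operator of order $-3$. Each normal or tangential derivative, in $x$ or in $y$, raises the order by at most one. Since $\partial D$ is smooth, the curvature factors $\kappa,\kappa'$ are smooth multipliers and do not change the order.

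The first step is a naive order count.
\begin{itemize}
\item $K_{12}$ has one derivative, so it has order $\le -2$. It maps $H^{-1/2}\to H^{3/2}$ (which needs order $\le 2$) with two orders to spare.
\item $K_{22}$ has three derivatives, so it has order $\le 0$. The target $H^{-1/2}\to H^{-1/2}$ needs order $<0$, so here we need a gain of at least one order.
\item $K_{11}$ has three derivatives, so it has order $\le 0$. The target $H^{3/2}\to H^{3/2}$ needs order $<0$, so again we need a gain.
\item $K_{21}$ has five derivatives, so it has order $\le 2$. The target $H^{3/2}\to H^{-1/2}$ needs order $<-2$, so here we need a gain of more than four orders.
\end{itemize}
Only $\mathcal{K}_{12}$ is compact by counting alone. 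The other three operators need genuine cancellation in their principal symbols, and this is exactly what the coefficients $\alpha_1,\alpha_2,\alpha_3$ and the extracted term $c_0\kappa^2 I$ (and the $-\tfrac12 I$ jumps) are designed to achieve, following \cite{farkas,nekrasov2025boundary}.

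The second step handles the kernels $K_{11}$ and $K_{22}$. Write $r=x-y$ and expand the kernels of the biharmonic part $r^2\log r$ in local arclength coordinates near the diagonal. Using $n_y\cdot r = O(|s-t|^2)$ and $\tau_y\cdot r = -(s-t)+O(|s-t|^2)$ in a parametrization, one checks the following.
\begin{itemize}
\item The combination $\partial_{n_y}^3+\alpha_1\partial_{n_y}\partial_{\tau_y}^2$ with $\alpha_1 = 2-\nu$ (in fact $\partial_n(\partial_n^2+\partial_\tau^2)=\partial_n\Delta$ at the principal level) has a double-layer-type kernel. This kernel is smooth up to $\kappa$-dependent terms, and its jump is the $-\tfrac12 I$.
\item The next-order terms of the expansion are killed by the choices of $\alpha_2$ and $\alpha_3$. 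The remainder is a kernel of order $\le -1$, or even lower, which is compact on $H^{3/2}$.
\item The same computation applies to $K_{22}$: $\partial_{n_x}^2\partial_{n_y}+\nu\partial_{\tau_x}^2\partial_{n_y}$ acting on $r^2\log r$ reduces to a double-layer principal part plus a smoother remainder.
\end{itemize}
The lower-order pieces of $\Phi$, namely the $r^4\log r$ and analytic terms, contribute operators of order at least two lower, which are compact.

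The third step, which I expect to be the main obstacle, is $K_{21}$. Its five derivatives acting on $r^2\log r$ produce a hypersingular kernel of nominal order $2$. We must show that, after extracting $c_0\kappa^2 I$, the remaining principal symbols of orders $2,1,0,\dots,-2$ all vanish. I would compute the full symbol of $\partial_{n_x}^2(\cdot)+\nu\partial_{\tau_x}^2(\cdot)$ composed with the $K_{11}$-type operator in Fourier variables on the tangent line, $|\xi|$-homogeneous symbols, and track curvature corrections using the symbol calculus on curves. I would rely on the explicit computations in \cite{nekrasov2025boundary,agocs2026integral}, which show that the $\alpha_i$ are chosen exactly to make this composite operator equal to $c_0\kappa^2 I$ plus a smoothing-order remainder. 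If that full cancellation is unavailable, a fallback is to accept compactness of $\mathcal{K}_{21}$ only between appropriately shifted spaces. That would suffice for Theorem~\ref{thm:fredholm}, since a lower-triangular block with compact diagonal perturbations of $-\tfrac12 I$ is still Fredholm of second kind.
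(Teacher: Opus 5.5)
You follow the paper's route: split off the biharmonic part $\Phi^B=\frac{1}{8\pi}|x-y|^2\log|x-y|$, use the smoothness of the leading kernel combinations from \cite{nekrasov2025boundary}, and count pseudodifferential orders for the rest. (The constant has no $k^{-2}$ and the sign is $+$, and the next logarithmic term is $|x-y|^6\log|x-y|$, but neither point matters.) However, your Sobolev bookkeeping for the two off-diagonal blocks has a sign error, and it opens a genuine gap. An operator of order $m$ maps $H^s(\partial D)\to H^{s-m}(\partial D)$. So compactness $H^{-1/2}\to H^{3/2}$ requires $m<-2$, not $m\le 2$. Compactness $H^{3/2}\to H^{-1/2}$ requires only $m<2$, not $m<-2$.

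\textbf{The gap in $\mathcal{K}_{12}$.} Your naive count gives exactly $m=-2$, which gives boundedness but not compactness. An order $-2$ operator with nonvanishing principal symbol, such as the one with kernel $\partial_{\tau_y}\Phi^B$, is not compact into $H^{3/2}$. So ``compact by counting alone'' fails. The missing idea is the geometric cancellation you invoke in your second step but never apply here. Since $(x-y)\cdot n_y=O(|x-y|^2)$ on a smooth curve, the kernel $\partial_{n_y}\Phi^B=-\frac{1}{8\pi}(x-y)\cdot n_y\,(2\log|x-y|+1)$ restricts to $\pm\frac{\kappa}{8\pi}s^2\log|s|$ plus smoother terms. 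This is the singularity of the biharmonic single layer, an operator of order $-3$, which maps $H^{-1/2}\to H^{5/2}$ and hence compactly into $H^{3/2}$. That is exactly the paper's argument for $\mathcal{K}_{12}$.

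\textbf{The problem in $\mathcal{K}_{21}$.} Here the error runs the other way: you only need the order-$2$ part to vanish, not ``more than four orders'' of cancellation. In particular $c_0\kappa^2 I$ is itself compact from $H^{3/2}$ to $H^{-1/2}$, so extracting it plays no role in compactness. As written, your argument is also internally inconsistent. Under your own criterion $m<-2$, the remainder from five derivatives of the higher logarithmic terms (order $0$ by your ``at least two orders lower'') would not be compact. With the corrected criterion the remainders are harmless. The leading part then reduces to the same cited fact the paper uses: $K^B_{21}$ and $K^B_{22}$ are smooth kernels on a smooth curve \cite{nekrasov2025boundary}. Your fallback of compactness between shifted spaces would not prove the lemma as stated.

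\textbf{A minor point on $\mathcal{K}_{11}$.} The constants $\alpha_2,\alpha_3$ do not need to kill anything. The kernel $\kappa\,\partial_{n_y}^2\Phi^B$ is logarithmic (order $-1$) and $\kappa'\partial_{\tau_y}\Phi^B$ has order $-2$. Both map $H^{3/2}\to H^{5/2}$ and are compact on their own, which is how the paper treats them (via Theorem 9.5.8 of \cite{HsWe2021}).
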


\begin{proof}

Recall that the power series for the fundamental solution $\Phi(x,y)$ centered at $ x = y $ is given by:
\begin{equation}
     \Phi(x ,y ) =   K^S(x ,y ) + \Phi^B(x,y) + \mathcal{O}( |x -y |^6 \ln |x -y | ) ,
\end{equation}
where $K^S(x,y)$ is a $C^\infty$ function and the leading order singularity of $\Phi(x,y)$ is given by the biharmonic Green's function $\Phi^B(x,y) := \frac{1}{8\pi} |x-y|^2 \ln |x-y|$. Let $K_{ij}^B$ denote the kernels above with derivatives applied to $\Phi^B(x,y)$. In \cite{nekrasov2025boundary}, it was shown that $K_{21}^B$ and $K_{22}^B$ are smooth for a smooth curve and, therefore, $\mathcal{K}_{21}^B$ and $\mathcal{K}_{22}^B$ are compact. It was also shown that $ \left[\partial_{n_y}^3 + \alpha_1 \partial_{n_y} \partial_{\tau_y}^2 \right] \Phi^B(x,y) $ is smooth when restricted to the curve, therefore $\mathcal{K}^B_{11} - \overline{\mathcal{K}}^B_{11}$ is compact, where $\overline{\mathcal{K}}^B_{11}$ is the remainder corresponding to the terms $ \left[ \alpha_2 \kappa(y) \partial_{n_y}^2
    + \alpha_3 \kappa'(y) \partial_{\tau_y}\right] \Phi^B(x,y) $. To show compactness of $\overline{\mathcal{K}}^B_{11}$, we note that since $\Phi^B(x,y)$ is the fundamental solution of a fourth-order elliptic PDE, the theory of pseudo-differential operators (e.g. Theorem 9.5.8 in \cite{HsWe2021}) guarantees that $\overline{\mathcal{K}}^B_{11}$ is a continuous operator from $H^{3/2}(\partial D) \to H^{5/2}(\partial D)$ which is compactly embedded in $H^{3/2}(\partial D)$. To show that $\mathcal{K}^B_{12}$ is compact, we note that this kernel can be expanded on the surface as $K^B_{12}(\gamma(s),\gamma(0)) = \frac{1}{8\pi} s^2 \kappa \log(s) + \mathcal{O}(s^3 \log(s))$, where $\gamma(s)$ is the arc-length parametrization of the curve. For a smooth curve, this kernel has the same leading-order singularity as the kernel of the biharmonic single layer potential $\int_{\partial D} G^B(x,y) \varphi(y) \, \dd s_y$, which by Theorem 9.5.8 in \cite{HsWe2021} is a continuous operator from $H^{-1/2}(\partial D) \to H^{5/2}(\partial D)$. Therefore, $\mathcal{K}_{12}^B$ is compact from $H^{-1/2}(\partial D) \to H^{3/2}(\partial D)$. The next singular term in the power series for $\Phi(x,y)$ is $|x-y|^6 \log(|x-y|)$, which is the fundamental solution to an eighth-order elliptic PDE. Since we are taking at most five derivatives in the kernels above, the rest of the terms in the power series correspond to compact operators. 
\end{proof}

Theorem \ref{thm:fredholm} follows immediately from the lemma above. For convenience, let $\mathcal{Q}$ denote the operator appearing on the left-hand side of \eqref{eq:supported_IE}:
\begin{equation*}
    \mathcal{Q} := 
    \begin{pmatrix}
        -\frac12 I + \mathcal{K}_{11} & \mathcal{K}_{12} \\
        c_0 \kappa^2 I + \mathcal{K}_{21} &  - \frac12 I + \mathcal{K}_{22}
    \end{pmatrix}.
\end{equation*}
By analytic Fredholm theory, the operator $\mathcal{Q}$ is  invertible except at a discrete set of wavenumbers. Applying the bounded inverse theorem, there exists a $\mathcal{Q}^{-1}: H^{3/2}(\partial D) \times H^{-1/2}(\partial D) \to H^{3/2}(\partial D) \times H^{-1/2} (\partial D)$ for almost all wavenumbers such that:
\begin{equation*}
    \begin{pmatrix}
        \varphi_1 \\[2pt]
        \varphi_2
    \end{pmatrix}
    = - \mathcal{Q}^{-1}
    \begin{pmatrix}
        u^i \\[2pt]
M[u_i]   \end{pmatrix}.
\end{equation*}
Then, from \eqref{eq:Phi_ff} and \eqref{scatteredus}, the far-field pattern of the scattered field is given by:
\begin{align}
    u^\infty(\hat{x}) &= \int_{\partial D} \big( \partial_{n_y}^3 e^{-ik\hat{x}\cdot y} + \alpha_1 \partial_{n_y} \partial_{\tau_y}^2 e^{-ik\hat{x}\cdot y} + \alpha_2 \kappa(y) \partial_{n_y}^2 e^{-ik\hat{x}\cdot y} \nonumber \\
    & \qquad + \alpha_3 \kappa'(y) \partial_{\tau_y} e^{-ik\hat{x}\cdot y} \big) \varphi_1(y)  \, \dd s_y  + \int_{\partial D} \partial_{n_y} e^{-ik\hat{x}\cdot y} \varphi_2(y)  \, \dd s_y , 
\end{align}
Therefore, we define the following operator $\mathcal{M}^{\infty} : H^{3/2} (\partial D) \times H^{-1/2}(\partial D) \to L^2(\mathbb{S}^1)$ which maps from boundary densities to the far-field pattern:
\begin{align*}
    \mathcal{M}^{\infty}   [(
        \varphi_1, 
        \varphi_2 )^T](\hat{x}) &:= \int_{\partial D} \big( \partial_{n_y}^3 e^{-ik\hat{x}\cdot y} + \alpha_1 \partial_{n_y} \partial_{\tau_y}^2 e^{-ik\hat{x}\cdot y} + \alpha_2 \kappa(y) \partial_{n_y}^2 e^{-ik\hat{x}\cdot y} \nonumber \\
    & \qquad + \alpha_3 \kappa'(y) \partial_{\tau_y} e^{-ik\hat{x}\cdot y} \big) \varphi_1(y)  \, \dd s_ y  + \int_{\partial D} \partial_{n_y} e^{-ik\hat{x}\cdot y} \varphi_2(y)  \, \dd s_y , 
\end{align*}
Consequently, the far-field pattern for a supported cavity $D$ for an incident plane wave $u^i$ can be represented as
\begin{equation}
    u^{\infty}(\hat{x}, d) = \mathcal{M}^{\infty} \mathcal{Q}^{-1}
    \begin{pmatrix}
        u^i \\[2pt]
M[u^i]    \end{pmatrix} ,
    \label{farfield-expression}
\end{equation}
allowing us to write the far‑field operator as 
\begin{equation}\mathcal{F} = \mathcal{M}^{\infty} \mathcal{Q}^{-1} \mathcal{H} \, . \end{equation} 
This factorization will be useful for proving properties of the indicator functions used for the DSM in the following section.

\subsection{Properties of the imaging functions}

The next result provides an estimate on the rate of decay for the indicator function $\Ione$ away from the obstacle $D$.

\begin{theorem}\label{besselupperbound}
Let $\Ione$ be defined as above with regularization parameter $\rho > 0$. Then, for almost all wavenumbers, $\Ione$ satisfies:
\begin{equation*}
    \big| \mathcal{I}_{\mathrm{DSM\text{-}1}}(x)| = \mathcal{O} ( \dis (x,D)^{-\rho/2} ) , \quad \mathrm{ as } \quad  \mathrm{dist}(x, D) \to \infty,
\end{equation*}
for some compact domain $D$, where $\dis(x,D) := \inf \{ | x- y | \; | \; y \in D \}$.
\end{theorem}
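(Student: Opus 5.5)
The plan is to use the factorization $\mathcal{F} = \mathcal{M}^{\infty}\mathcal{Q}^{-1}\mathcal{H}$ from \eqref{farfield-expression}. For almost all wavenumbers $\mathcal{Q}^{-1}$ exists and is bounded, so I restrict to such $k$ throughout. Moving $\mathcal{M}^\infty$ to the other side of the inner product by taking its adjoint gives
\begin{equation*}
\langle \phi_z, \mathcal{F}[\phi_z]\rangle_{L^2(\mathbb{S}^1)} = \big\langle (\mathcal{M}^{\infty})^{*}\phi_z ,\, \mathcal{Q}^{-1}\mathcal{H}[\phi_z]\big\rangle .
\end{equation*}
Here the adjoint is taken with respect to a suitable duality pairing on $H^{3/2}(\partial D)\times H^{-1/2}(\partial D)$. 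The cleanest choice is to bound everything in the stronger norms, e.g.\ $C^3(\partial D)$, which embeds continuously into both factors. Then
\begin{equation*}
|\langle \phi_z, \mathcal{F}[\phi_z]\rangle| \le C\, \|\mathcal{H}[\phi_z]\|_{H^{3/2}\times H^{-1/2}}\; \|(\mathcal{M}^\infty)^*\phi_z\|_{(H^{3/2}\times H^{-1/2})^*},
\end{equation*}
with $C=\|\mathcal{Q}^{-1}\|$. It therefore suffices to show that each factor is $\mathcal{O}(\dis(z,D)^{-1/2})$. Raising their product to the power $\rho/2$ then gives the claimed $\mathcal{O}(\dis(z,D)^{-\rho/2})$.

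Both factors reduce to Bessel-type integrals. Since $\phi_z(d) = e^{-ikz\cdot d}$, the Herglotz function is
\begin{equation*}
v_{\phi_z}(x) = \int_{\mathbb{S}^1} e^{ik(x-z)\cdot d}\,\dd s(d) = 2\pi J_0(k|x-z|).
\end{equation*}
The data $\mathcal{H}[\phi_z]$ involves $v_{\phi_z}$ and up to two of its derivatives on $\partial D$, and its $H^{3/2}$-type norm involves a couple more derivatives. For $x\in\partial D$, all derivatives of $J_0(k|x-z|)$ in $x$ are linear combinations of $J_m(k|x-z|)$, $m\le 4$, with coefficients that stay bounded as $|x-z|\to\infty$. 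Using $|J_m(t)|\le C t^{-1/2}$ for large $t$ and $|x-z|\ge \dis(z,D)$ uniformly on the compact set $\partial D$, I get $\|\mathcal{H}[\phi_z]\|_{C^4(\partial D)} = \mathcal{O}(\dis(z,D)^{-1/2})$. Sobolev embedding then bounds the $H^{3/2}\times H^{-1/2}$ norm.

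For the other factor, $(\mathcal{M}^\infty)^*\phi_z$ is obtained by swapping the order of integration. It is the boundary function
\begin{equation*}
y\mapsto \int_{\mathbb{S}^1} \overline{L_y e^{-ik\hat x\cdot y}}\, e^{-ik z\cdot\hat x}\,\dd s(\hat x),
\end{equation*}
where $L_y$ is the differential operator appearing in $\mathcal{M}^\infty$ (for instance $\partial_{n_y}^3+\alpha_1\partial_{n_y}\partial_{\tau_y}^2+\dots$, and $\partial_{n_y}$ for the second component). Pulling the $y$-derivatives outside the $\hat x$-integral turns this into derivatives up to order three of $2\pi J_0(k|y-z|)$. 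These have coefficients involving $\kappa,\kappa'$, which are smooth and bounded on $\partial D$. The same Bessel asymptotics give a $C^m(\partial D)$ bound of order $\dis(z,D)^{-1/2}$ for any fixed $m$, hence a bound in the dual space.

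The main obstacle is making the adjoint and duality step rigorous. $\mathcal{M}^\infty$ acts on $H^{3/2}\times H^{-1/2}$, and the pairing must be consistent with how $\mathcal{Q}^{-1}$ is bounded. I would handle this by working with the $L^2(\partial D)$ pairing and controlling $(\mathcal{M}^\infty)^*\phi_z$ in $H^{-3/2}\times H^{1/2}$, and bounding it crudely by its $C^2$ norm. This is legitimate because the function is smooth, and smoothness makes all the required embeddings continuous. A secondary point is ensuring uniformity of the Bessel-derivative bounds over $\partial D$. This follows because $|y-z|/\dis(z,D)$ is bounded above and below for all $y\in\partial D$ once $\dis(z,D)$ is large, since $D$ is compact.
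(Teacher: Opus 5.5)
Your proposal is correct and follows essentially the same route as the paper. Both arguments use the factorization $\mathcal{F}=\mathcal{M}^{\infty}\mathcal{Q}^{-1}\mathcal{H}$, move $\mathcal{M}^{\infty}$ across the pairing, bound by $\|\mathcal{Q}^{-1}\|$ times the $H^{-3/2}\times H^{1/2}$ and $H^{3/2}\times H^{-1/2}$ norms, and reduce both factors to derivatives of $2\pi J_0(k|\cdot-z|)$ on $\partial D$. The only difference is the last step: you get the $\mathcal{O}(\dis(z,D)^{-1/2})$ decay of each factor directly from pointwise Bessel asymptotics and the embedding of $C^m(\partial D)$ into the boundary Sobolev spaces, whereas the paper uses the trace theorem and cites Lemma 3.2 of \cite{HLL-biharmonic}; your version is more self-contained, and its per-factor rate $\dis(z,D)^{-1/2}$ gives exactly the stated exponent $-\rho/2$.
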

\begin{proof}
Given the factorization $\mathcal{F} =  \mathcal{M}^{\infty} \mathcal{Q}^{-1} \mathcal{H}$, the operator $\mathcal{Q}^{-1} $ is bounded for almost all wavenumbers. Therefore, we can use Cauchy-Schwarz to write:
\begin{align}
    \big| \langle \phi_z, \mathcal{F} [\phi_z] \rangle_{L^2(\mathbb{S}^1)} \big|^{\rho/2}
    &= \big| \langle (\mathcal{M}^{\infty})^{*} [\phi_z], \mathcal{Q}^{-1} \mathcal{H} [\phi_z] \rangle \big|^{\rho/2} \nonumber \\
    &\leq C \| (\mathcal{M}^{\infty})^{*} [\phi_z] \|_{H^{-3/2} (\partial D) \times H^{1/2}(\partial D) }^{\rho/2} \| \mathcal{H} [\phi_z] \|_{H^{3/2} (\partial D) \times H^{-1/2}(\partial D) }^{\rho/2}  \label{eq:innerprod}
\end{align}
where $(\mathcal{M}^{\infty})^{*}$ is given by
\begin{align*}
( \mathcal{M}^{\infty} )^* [\phi_z] (x) &= \begin{bmatrix} \partial_{n_x}^3  + \alpha_1 \partial_{n_x} \partial_{\tau_x}^2 + \alpha_2 \kappa(x) \partial_{n_x}^2   + \alpha_3 \kappa'(x) \partial_{\tau_x}   \\ 
\partial_{n_x} \end{bmatrix} \int_{\mathbb{S}^1}   e^{ikx \cdot d} \phi_z(d) \, \dd s(d)  \\
&= 2\pi \begin{bmatrix} \partial_{n_x}^3  + \alpha_1 \partial_{n_x} \partial_{\tau_x}^2 + \alpha_2 \kappa(x) \partial_{n_x}^2   + \alpha_3 \kappa'(x) \partial_{\tau_x}   \\ 
\partial_{n_x} \end{bmatrix}  J_0(k|x-z|)
\end{align*}
for $x \in \partial D$ 
where $J_0$ denotes the Bessel function of the first kind of order zero. Then, we can use the trace theorem to write:
\begin{equation*}
    \| (\mathcal{M}^{\infty})^{*} [\phi_z] \|_{H^{-3/2} (\partial D) \times H^{1/2}(\partial D) } \leq C \| J_0(k| \,  \cdot \, -z|) \|_{H^2( D )}
\end{equation*}
The other term in inequality \eqref{eq:innerprod} can be written in terms of $J_0$ as well, i.e. 
\begin{equation*}
    \mathcal{H}[\phi_z](x) = 2\pi \big( J_0(k|x - z|), \,  M[J_0(k|\,\cdot \,-z|)](x) \, \big)^T ,
\end{equation*}    
Again, we use the trace theorem to say that 
\begin{equation*}
     \| \mathcal{H} [\phi_z] \|_{H^{3/2} (\partial D) \times H^{-1/2}(\partial D) } \leq C \| J_0(k |\cdot - z | ) \|_{H^2(D)} 
\end{equation*}

Finally, in Lemma 3.2 of  \cite{HLL-biharmonic}, the authors showed that for $|\alpha|\leq 4$ $ \| D^\alpha J_0(|x- \, \cdot \, |) \|_{L^2(D)} = \mathcal{O}(\dis (x,D)^{-1} ) $ as $\dis (x,D) \to \infty$, from which the result follows immediately.
\end{proof}

Now, we turn to the second imaging functional of interest, defined by
\begin{equation*}
\Itwo(x) = \| \mathcal{F} [\phi_z ] \|_{L^2(\mathbb{S}^1)}^{\rho}, \qquad \rho > 0.
\end{equation*}

Though the behavior of this second functional is similar to that of the first, we include it here for completeness. It is possible to show that this functional has the same decay properties as the first functional, and indeed, this was recently shown for the clamped plate problem. We state this result and refer the reader to \cite{HLL-biharmonic} for the proof:

\begin{theorem}\label{resolution2}
Let $\Itwo$ be defined as above with regularization parameter $\rho > 0$. Then, for almost all wavenumbers, $\Itwo$ satisfies:
\begin{equation*}
    \big| \Itwo (x)| = \mathcal{O} ( \dis (x,D)^{-\rho/2} ) , \quad \mathrm{ as } \quad  \mathrm{dist}(x, D) \to \infty,
\end{equation*}
for some compact domain $D$, where $\dis(x,D) := \inf \{ | x- y | \; | \; y \in D \}$.
\end{theorem}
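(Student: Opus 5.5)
We bound $\|\mathcal{F}[\phi_z]\|_{L^2(\mathbb{S}^1)}$ through the factorization $\mathcal{F} = \mathcal{M}^{\infty}\mathcal{Q}^{-1}\mathcal{H}$ from \Cref{sec:supported_IE}, following the proof of \Cref{besselupperbound}. The only real change is that the inner product is replaced by a norm. We work at wavenumbers where $\mathcal{Q}^{-1}$ exists and is bounded on $H^{3/2}(\partial D)\times H^{-1/2}(\partial D)$; by analytic Fredholm theory this excludes only a discrete set.

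First, we will show that $\mathcal{M}^{\infty}: H^{3/2}(\partial D)\times H^{-1/2}(\partial D)\to L^2(\mathbb{S}^1)$ is bounded. Its kernel is built from derivatives in $y$ of $e^{-ik\hat{x}\cdot y}$ of order at most three, multiplied by the smooth coefficients $\kappa,\kappa'$. These kernels are smooth in $y$, uniformly in $\hat{x}\in\mathbb{S}^1$. Hence, for each fixed $\hat{x}$, the pairing with $\varphi_1\in H^{3/2}(\partial D)$ is bounded by $C\|\varphi_1\|_{H^{3/2}}$, since $H^{3/2}\subset L^2$. The pairing with $\varphi_2\in H^{-1/2}(\partial D)$ is bounded by $C\|\partial_{n_y}e^{-ik\hat{x}\cdot y}\|_{H^{1/2}(\partial D)}\|\varphi_2\|_{H^{-1/2}}$, with a constant uniform in $\hat{x}$. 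Integrating over $\hat{x}$ then gives $\|\mathcal{M}^\infty\|<\infty$.

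Next, combining the three factors,
\begin{equation*}
\|\mathcal{F}[\phi_z]\|_{L^2(\mathbb{S}^1)} \le \|\mathcal{M}^\infty\|\,\|\mathcal{Q}^{-1}\|\,\|\mathcal{H}[\phi_z]\|_{H^{3/2}(\partial D)\times H^{-1/2}(\partial D)}.
\end{equation*}
From the proof of \Cref{besselupperbound} we have $\mathcal{H}[\phi_z] = 2\pi\big(J_0(k|\cdot-z|),\,M[J_0(k|\cdot-z|)]\big)^T$. By the trace theorem, its norm is at most $C\|J_0(k|\cdot-z|)\|_{H^{s}(D)}$ for some fixed $s$. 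The first component in $H^{3/2}(\partial D)$ needs $s=2$. The second component involves second derivatives in $H^{-1/2}(\partial D)$, which also needs $s=2$; if one prefers a cruder trace estimate, $s=3$ or $s=4$ works as well.

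Finally, Lemma 3.2 of \cite{HLL-biharmonic} gives $\|D^\beta J_0(k|\cdot - z|)\|_{L^2(D)}=\mathcal{O}(\dis(z,D)^{-1/2})$ for $|\beta|\le 4$. The pointwise estimate $|D^\beta J_0(kr)|=\mathcal{O}(r^{-1/2})$ makes $\mathcal{O}(\dis^{-1/2})$ the natural rate for a norm, even though the proof of \Cref{besselupperbound} quotes it as $\dis^{-1}$. Raising the bound to the power $\rho$ then yields $\Itwo(z)=\mathcal{O}(\dis(z,D)^{-\rho/2})$, which is the stated rate.

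The main obstacle is matching the exponent. This requires care about which decay rate the cited lemma provides and how many derivatives the trace estimates consume. Everything else is routine given the factorization. If the lemma in fact gives $\dis^{-1}$, the bound above yields the stronger rate $\dis^{-\rho}$, which implies the claim.
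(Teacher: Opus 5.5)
Your argument is correct and follows the route the paper has in mind. The paper defers to \cite{HLL-biharmonic}, and its own proof of \Cref{besselupperbound} is your argument with an inner product in place of your operator-norm bound through $\mathcal{F}=\mathcal{M}^{\infty}\mathcal{Q}^{-1}\mathcal{H}$, the trace theorem and the Bessel decay lemma. You are also right that $\|D^\beta J_0(k|\cdot-z|)\|_{L^2(D)}$ decays like $\dis(z,D)^{-1/2}$, which is exactly what gives the stated exponent $-\rho/2$. The one soft spot is your claim that $s=2$ suffices for $\|M[J_0(k|\cdot-z|)]\|_{H^{-1/2}(\partial D)}$: that needs the weak (Green's-formula) bending-moment trace rather than the plain trace theorem. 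Your $s=3$ fallback, which the lemma's range $|\beta|\le 4$ covers, is the cleaner choice.
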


Using \Cref{resolution2}, it is evident that the two imaging functionals under consideration are equivalent up to multiplicative constants. Specifically, there exist positive constants \( c_1, c_2 > 0 \) such that
\begin{equation*}
c_1 | \Ione (x) |
\le 
 | \Itwo(x)  | \le  
c_2  | \Ione(x) | .
    \end{equation*}
for all $x \in \mathbb{R}^2 \setminus \overline{D}$. As a result, both imaging functionals are expected to provide comparable reconstructions of the obstacle $D$, and indeed, this is what is observed numerically.

\section{Numerical experiments} \label{sec:num_results}
In this section, we first discuss the numerical implementation of the methods before presenting numerical experiments for the LSM and DSM. In the first experiment, we use simulated noise-free data at different frequencies to assess the qualitative performance of the two methods. In the second experiment, we repeat the same setup but introduce additive and multiplicative noise of varying intensities. In the third experiment, we investigate the behavior of the methods for different values of the Poisson's ratio. In the fourth experiment, we consider the reconstruction of multiple obstacles. Finally, we study the effect of limited available data on the reconstruction of single and multiple obstacles. 

\subsection{Numerical implementation}
We now discuss the implementation of the methods. For both approaches, we must discretize the far‑field operator. Since $d=(\cos(\theta_d),\sin(\theta_d) )\in \mathbb{S}^1$, we discretize the interval $[0,2\pi)$ uniformly to obtain $d_j=(\cos(2j\pi/N_d),\sin(2j\pi/N_d))$, $j=1,\ldots,N_d$, where $N_d$ denotes the number of incident directions used to probe the domain. Using these points, we approximate the integral in the far‑field operator via the trapezoidal rule:
\begin{equation}
    \mathcal{F}[g_z](\hat{x})\approx \frac{2\pi}{N_d}\sum_{j=1}^{N_d} u^{\infty}(\hat{x},d_j)g_z(d_j).
\end{equation}

For the LSM, we solve the regularized minimization problem \eqref{eq:farfieldeq_regularized}, which is equivalent to solving
\begin{equation}
    (\alpha\mathcal{I}+\mathcal{F}^*\mathcal{F}) [ g_z ](\hat{x}) = \mathcal{F}^* [\phi_z ] (\hat{x}),
\end{equation}
where $\mathcal{I}$ is the identity operator. Taking $\hat{x}=(\cos(\theta_r),\sin(\theta_r))\in \mathbb{S}^1$, we can discretize the interval $[0,2\pi)$ uniformly, to obtain $\hat{x}_\ell= ( \cos( 2 \ell \pi/N_r ) , \sin ( 2 \ell \pi / N_r) )$, $\ell =1,\ldots,N_r$, where $N_r$ is the number of receivers used to measure the scattered data. 

With these discretizations, we obtain the linear system
\begin{equation} \label{eq:discrete_LSM}
 (\alpha {\bf I+F^*F}){\bf g}_z={\bf F^*}{\boldsymbol \phi}_z,
\end{equation}
where ${\bf F}$ is an $N_r\times N_d$ matrix with elements ${\bf F}_{\ell j}=(2\pi/N_d)u^{\infty}(\hat{x}_\ell ,d_j)$, $\mathbf{F}^*$ is its adjoint, ${\bf I}$ is the $N_d\times N_d$ identity matrix, ${\bf g}_z$ is a vector with $N_d$ components $g_z(d_j)$ being our variables, and ${\boldsymbol \phi}_z$ is a vector with $N_r$ components $\phi_z(\hat{x}_\ell)$. After computing $g_z$ at all sampling points $z$, we evaluate the LSM indicator function \eqref{eq:lsm-indicator}.

Regarding regularization, in the LSM we tested $\alpha = 10^{-j}$, $j = 1,\ldots,6$, and in each experiment we present the reconstruction giving the best qualitative result; reconstructions for other values of $\alpha$ are very similar.  
For the DSM, since the scaling parameter $\rho > 0$ affects only visualization and does not influence the localization properties of the indicators, we set $\rho = 2$ for $\Ione$ and $\rho = 1$ for $\Itwo$; other choices lead to visually comparable results. Finally, in all figures, the graphs are rescaled so that the maximum value of the indicator function is one. We display the rescaled DSM indicators directly, whereas for the LSM we plot the rescaled value of $\log(\mathcal{I}_{\mathrm{LSM}})$ to enhance contrast and improve visualization.

To obtain the far-field measurements needed in the discretization of the far-field operator $\mathcal{F}$, we use the boundary integral formulation described in \Cref{sec:supported_IE} and originally appearing in \cite{farkas,nekrasov2025boundary}. The forward solver is implemented using the MATLAB software package \texttt{chunkIE} \cite{chunkIE}. In this package, the boundary of the domain is divided into panels, and the integral operators are discretized using generalized Gaussian quadratures. The integral equation \eqref{eq:supported_IE} is then solved using a Nystr\"om method via GMRES. The calculation of the far-field pattern is done using the native smooth quadrature.

Because equation \eqref{eq:discrete_LSM} must be solved for every sampling point $z$, it is efficient to precompute a factorization of the left-hand side once (e.g., using an SVD of $\mathbf{F}$) and reuse it for each right-hand side. Computing such a factorization costs $\mathcal{O}\big(\min(N_d N_r^2,\; N_r N_d^2)\big)$, and applying it to each right-hand side costs $\mathcal{O}(N_d^2)$. Thus, for a total of $N_g$ sampling points, the overall complexity of LSM is $\mathcal{O}\big(\min(N_d N_r^2,\; N_r N_d^2)\big)+\mathcal{O}(N_g N_d^2)$. If $N_d = N_r$, this simplifies to
$\mathcal{O}(N_d^3) + \mathcal{O}(N_g N_d^2)$. In contrast, the DSM implementation is much simpler. No ill-posed equation must be solved, and therefore no regularization is needed. The indicator functions \eqref{eq:imaging-functions-DSM2} are simply evaluated at each sampling point. The dominant computational cost is computing the product $\mathbf{F}\boldsymbol{\phi}_z$, where $(\boldsymbol{\phi}_z)_j = \phi_z(d_j)$. In general, the cost of the DSM is $\mathcal{O}(N_g N_r N_d )$. When $\mathbf{F}$ is square with $N_d = N_r$, the overall DSM cost is $\mathcal{O}(N_g N_d^2)$.

We use three obstacle shapes in our experiments: a 5-arms star-shaped domain parameterized by
\begin{equation*}
    \gamma_A(t) = (1+0.3\cos(5t))(\cos(t),\sin(t)), \quad t\in[0,2\pi),
\end{equation*}
an 11-arms star-shaped domain parameterized by
\begin{equation*}
    \gamma_B(t) = (1+0.5\cos(11t))(\cos(t),\sin(t)), \quad t\in[0,2\pi),
\end{equation*}
and a domain with a pronounced cavity. We will refer to those domains as 5-arms, 11-arms, and cavity throughout the rest of this text. 

\begin{figure}[!ht]
    \centering
    \begin{subfigure}[b]{0.3\linewidth}
      \centering
      \includegraphics[width=1\linewidth]{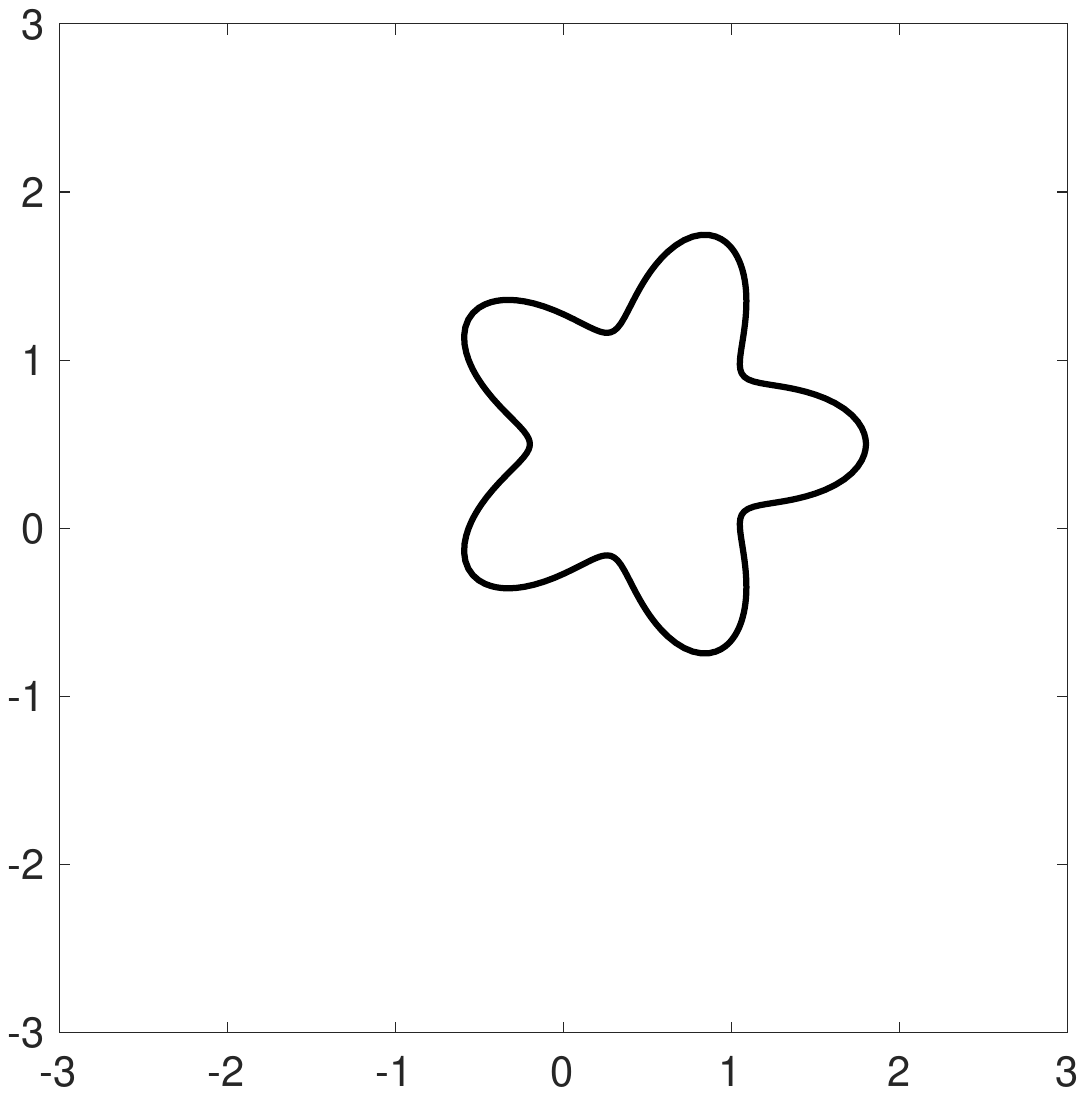}
      \caption{5-arms}
      \label{fig:5arms}
    \end{subfigure} \quad
    \begin{subfigure}[b]{0.3\linewidth}
      \centering
      \includegraphics[width=1\linewidth]{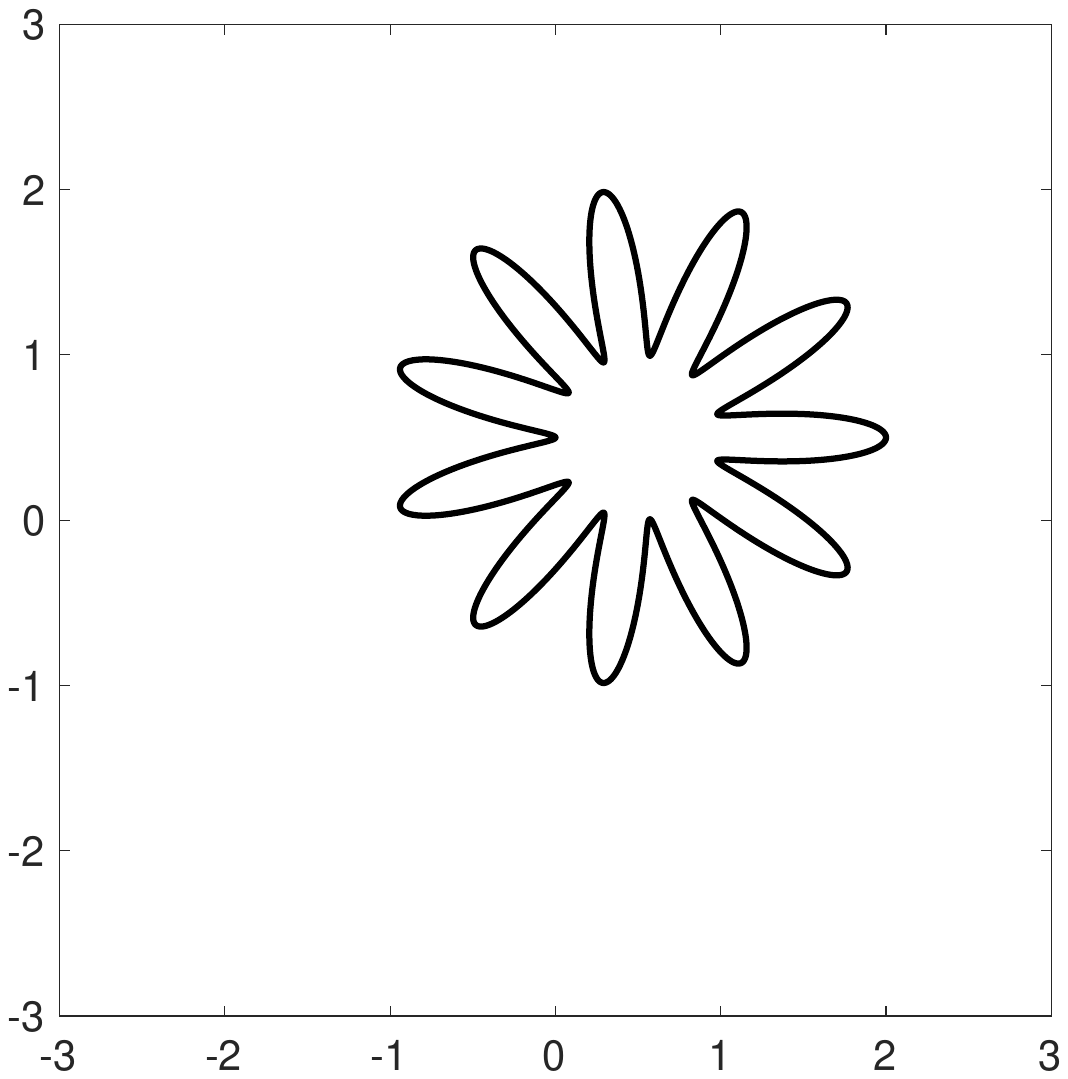}
      \caption{11-arms}
      \label{fig:11arms}
    \end{subfigure}   \quad
    \begin{subfigure}[b]{0.3\linewidth}
      \centering
      \includegraphics[width=1\linewidth]{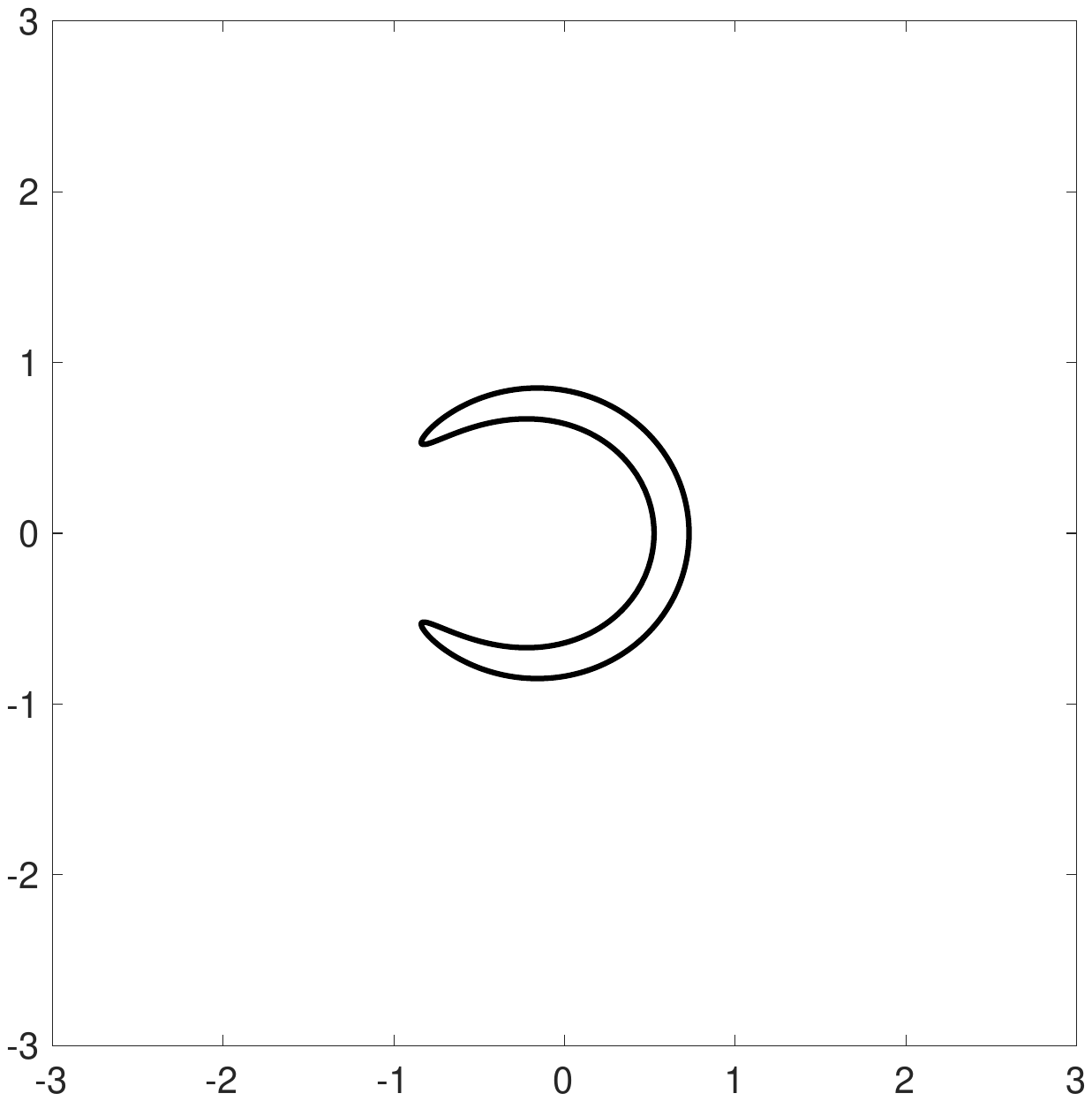}
      \caption{cavity}
      \label{fig:plane}
    \end{subfigure}        
\caption{Ground-truth domains used in the simulations: a) 5-arms, b) 11-arms, and c) cavity.}    
    \label{fig:domains}
\end{figure}

In all experiments -- except for Example 3, where we analyze the effect of the Poisson's ratio -- we use $\nu=0.3$, which is typical for materials such as ice and steel. In Example 3, we consider $\nu=0.5$ (close to rubber), $0$ (cork), and $-0.5$ (auxetic metamaterials).


We test the numerical accuracy of the forward solver for each of the wavenumbers and each of the domains and report it in Table \ref{tab:disc}. The table lists the number of points $N$ used on the boundary for the discretization and the error in the calculation of the solution. The error was assessed using an analytic solution test by placing a point source inside the obstacle and solving for the field at a collection of 10 arbitrary points outside. Then, the $\ell_\infty$ norm of the error at those points was calculated relative to the $L^1$ norm of the solution to the integral equation. Note that the forward solver provides at least 7 digits of accuracy for all the discretizations used.

\begin{table}[h]
    \centering    
    \caption{Discretization details and accuracy of the computed field.}
    \label{tab:disc}
    \begin{tabular}{|c|c|c|c|}
        \hline
        Domain & $k$ & $N$ & Relative error \\
        \hline \hline
        
        \multirow{2}{*}{5-arms} 
            & $2\pi$  & 288  & $1.26\times10^{-7}$ \\ \cline{2-4}
            & $20\pi$ &  2976    &  $2.46\times 10^{-12} $                  \\ \hline
        
        \multirow{2}{*}{11-arms} 
            & $2\pi$  & 3776 & $2.42\times10^{-7}$ \\ \cline{2-4}
            & $20\pi$ &   9344   &  $2.43\times 10^{-10}$                   \\ \hline
        
        \multirow{2}{*}{Cavity} 
            & $2\pi$  & 768  & $1.63\times10^{-8}$ \\ \cline{2-4}
            & $20\pi$ &   3584   & $3.84\times 10^{-14}$                     \\ \hline
        
        \multirow{2}{*}{Multiple} 
            & $2\pi$  &   1248   &  $2.22\times 10^{-9}$                    \\ \cline{2-4}
            & $10\pi$ &   5376   &  $3.00\times 10^{-14}$                   \\ \hline
        
    \end{tabular} 
        
\end{table}

\subsection{Example 1 -- Resolution}
The main objective of this example is to illustrate the performance of the method at different frequencies. In particular, we chose $k=2\pi$ and $20 \pi$ so that the wavelength of the incoming wave is $\lambda=1$ and $0.1$, respectively. The indicator function is calculated on a uniform grid over the domain $[-3,3]$ with $300$ points in each direction, totaling $N_g=300^2$. We use $N_d=N_r=128$ for $k=2\pi$ and $N_d=N_r=1024$ for $k=20\pi$. For the LSM, we set the regularization parameter to $\alpha = 10^{-4}$ based on empirical evidence that this choice of regularization parameter best matches the original shape.

The reconstructions for $k=2\pi$ are shown in Figure~\ref{fig:ex1_rec2pi}, and those for $k=20\pi$ in Figure~\ref{fig:ex1_rec20pi}. In all cases, the general location and what seems to be a substantial portion of the convex hull of the obstacles are recovered. For the 5-arms domain, the reconstructions are very close to the ground truth; however, for the 11-arms domain, the methods fail to resolve the small cavities along the boundary. A similar effect occurs in the reconstruction of the cavity-shaped domain, where most of the cavity structure is lost.

At higher frequency, both methods yield more detailed reconstructions. Overall, both methods are effective for obtaining a coarse approximation of the domain, but they are unable to recover fine-scale features, particularly cavities. When extracting a level-set curve to represent the reconstructed obstacle, one should therefore expect a good approximation of its convex hull rather than its detailed boundary structure.

The behavior observed in this example mirrors observations in related Helmholtz scattering problems; for example, \cite{askham2024random} shows that cavities are difficult to recover even with iterative methods in the Helmholtz setting. In \cite{borges2023robustness}, some cavities were reconstructed for penetrable obstacles, and we expect similar results could be achieved in the biharmonic case for penetrable inclusions as well, though this remains unclear for impenetrable ones.

\begin{figure}[!ht]
    \centering
    \begin{subfigure}[b]{0.3\linewidth}
      \centering
      \includegraphics[width=1\linewidth]{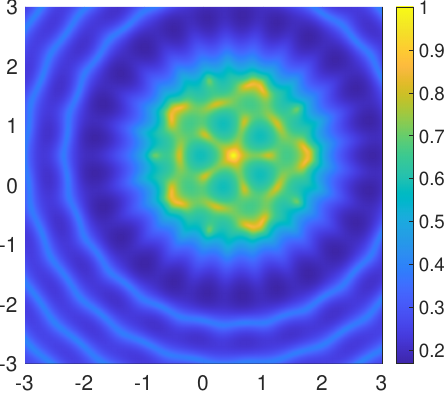}
      \caption{LSM, $k=2\pi$, 5-arms}
      \label{fig:5arms_k2pi_lsm}
    \end{subfigure} \quad
    \begin{subfigure}[b]{0.3\linewidth}
      \centering
      \includegraphics[width=1\linewidth]{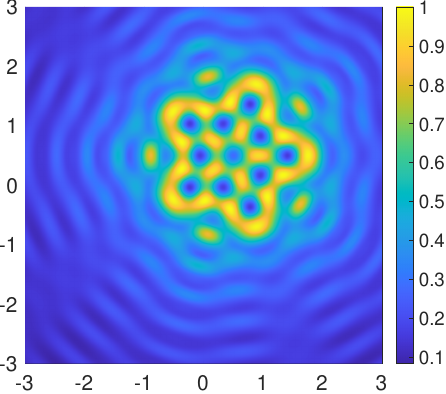}
      \caption{DSM--1, $k=2\pi$, 5-arms}
      \label{fig:5arms_k2pi_dsm1}
    \end{subfigure}   \quad
    \begin{subfigure}[b]{0.3\linewidth}
      \centering
      \includegraphics[width=1\linewidth]{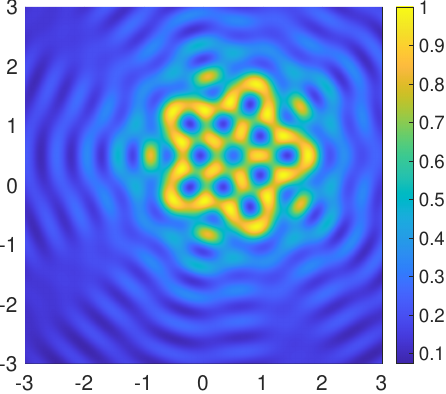}
      \caption{DSM--2, $k=2\pi$, 5-arms}
      \label{fig:5arms_k2pi_dsm2}
    \end{subfigure}        

    \centering
    \begin{subfigure}[b]{0.3\linewidth}
      \centering
      \includegraphics[width=1\linewidth]{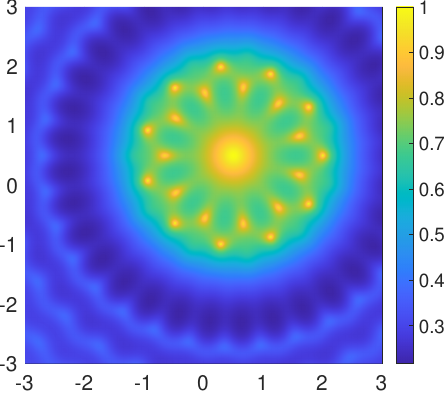}
      \caption{LSM, $k=2\pi$, 11-arms}
      \label{fig:11arms_k2pi_lsm}
    \end{subfigure} \quad
    \begin{subfigure}[b]{0.3\linewidth}
      \centering
      \includegraphics[width=1\linewidth]{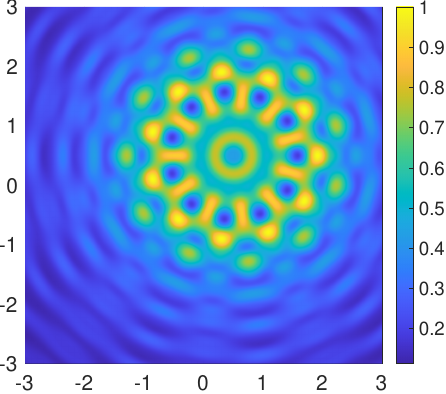}
      \caption{DSM--1, $k=2\pi$, 11-arms}
      \label{fig:11arms_k2pi_dsm1}
    \end{subfigure}   \quad
    \begin{subfigure}[b]{0.3\linewidth}
      \centering
      \includegraphics[width=1\linewidth]{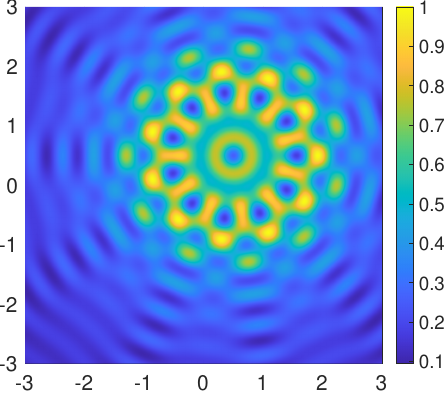}
      \caption{DSM--2, $k=2\pi$, 11-arms}
      \label{fig:11arms_k2pi_dsm2}
    \end{subfigure}  

    \centering
    \begin{subfigure}[b]{0.3\linewidth}
      \centering
      \includegraphics[width=1\linewidth]{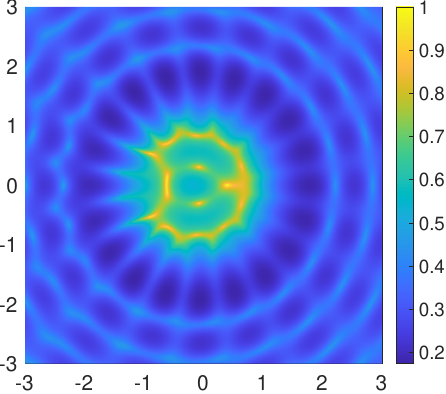}
      \caption{LSM, $k=2\pi$, cavity}
      \label{fig:cavity_k2pi_lsm}
    \end{subfigure} \quad
    \begin{subfigure}[b]{0.3\linewidth}
      \centering
      \includegraphics[width=1\linewidth]{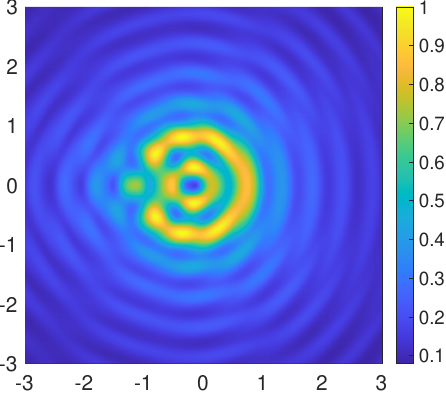}
      \caption{DSM--1, $k=2\pi$, cavity}
      \label{fig:cavity_k2pi_dsm1}
    \end{subfigure}   \quad
    \begin{subfigure}[b]{0.3\linewidth}
      \centering
      \includegraphics[width=1\linewidth]{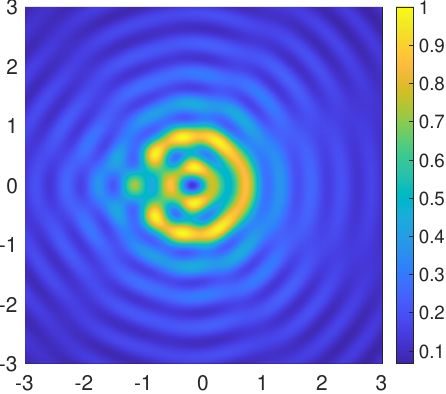}
      \caption{DSM--2, $k=2\pi$, cavity}
      \label{fig:cavity_k2pi_dsm2}
    \end{subfigure}  

\caption{Reconstructions using the LSM (left column), DSM--1 (middle column), and DSM--2 (right column) at wavenumber $k=2\pi$: the 5-arms domain (top row), the 11-arms domain (middle row), and the cavity (bottom row).}    
    \label{fig:ex1_rec2pi}
\end{figure}

\begin{figure}[!ht]
    \centering
    \begin{subfigure}[b]{0.3\linewidth}
      \centering
      \includegraphics[width=1\linewidth]{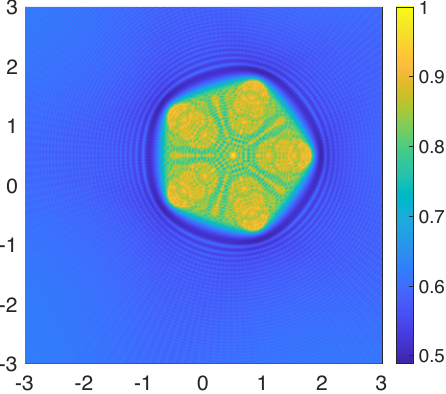}
      \caption{LSM, $k=20\pi$, 5-arms}
      \label{fig:5arms_k20pi_lsm}
    \end{subfigure} \quad
    \begin{subfigure}[b]{0.3\linewidth}
      \centering
      \includegraphics[width=1\linewidth]{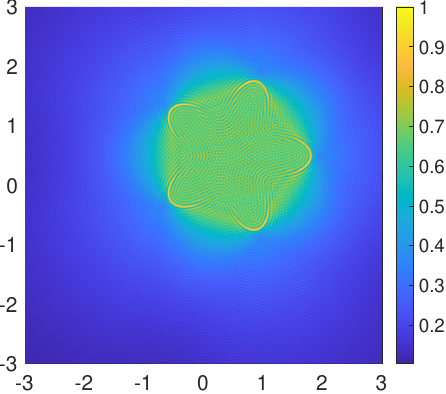}
      \caption{DSM--1, $k=20\pi$, 5-arms}
      \label{fig:5arms_k20pi_dsm1}
    \end{subfigure}   \quad
    \begin{subfigure}[b]{0.3\linewidth}
      \centering
      \includegraphics[width=1\linewidth]{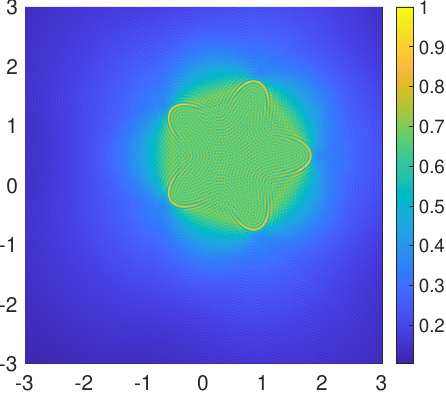}
      \caption{DSM--2, $k=20\pi$, 5-arms}
      \label{fig:5arms_k20pi_dsm2}
    \end{subfigure}        

    \centering
    \begin{subfigure}[b]{0.3\linewidth}
      \centering
      \includegraphics[width=1\linewidth]{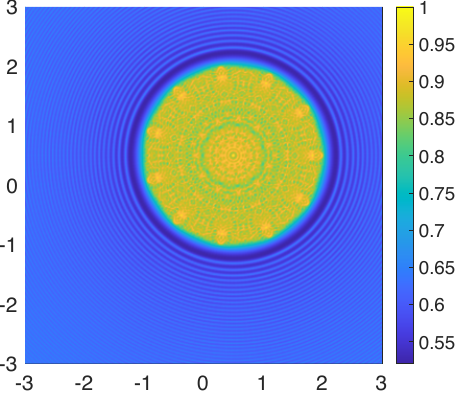}
      \caption{LSM, $k=20\pi$, 11-arms}
      \label{fig:11arms_k20pi_lsm}
    \end{subfigure} \quad
    \begin{subfigure}[b]{0.3\linewidth}
      \centering
      \includegraphics[width=1\linewidth]{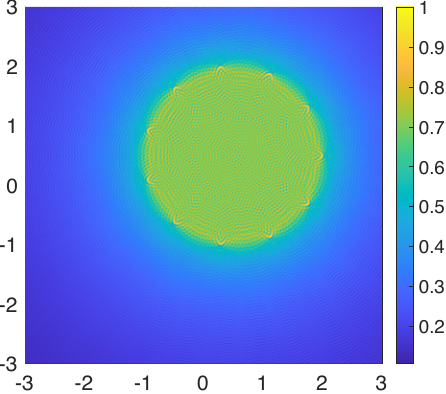}
      \caption{DSM--1, $k=20\pi$, 11-arms}
      \label{fig:11arms_k20pi_dsm1}
    \end{subfigure}   \quad
    \begin{subfigure}[b]{0.3\linewidth}
      \centering
      \includegraphics[width=1\linewidth]{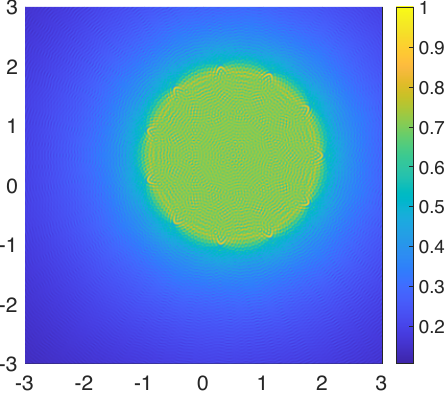}
      \caption{DSM--2, $k=20\pi$, 11-arms}
      \label{fig:11arms_k20pi_dsm2}
    \end{subfigure}  

    \centering
    \begin{subfigure}[b]{0.3\linewidth}
      \centering
      \includegraphics[width=1\linewidth]{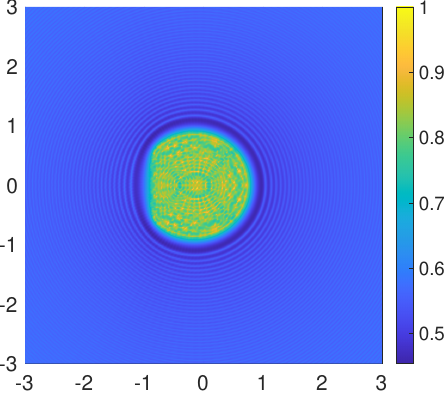}
      \caption{LSM, $k=20\pi$, cavity}
      \label{fig:cavity_k20pi_lsm}
    \end{subfigure} \quad
    \begin{subfigure}[b]{0.3\linewidth}
      \centering
      \includegraphics[width=1\linewidth]{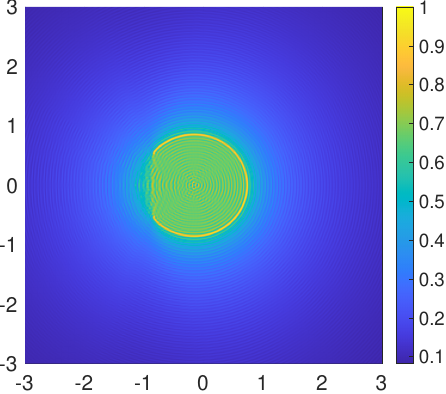}
      \caption{DSM--1, $k=20\pi$, cavity}
      \label{fig:cavity_k20pi_dsm1}
    \end{subfigure}   \quad
    \begin{subfigure}[b]{0.3\linewidth}
      \centering
      \includegraphics[width=1\linewidth]{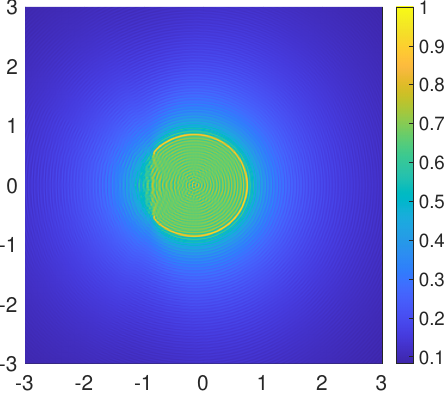}
      \caption{DSM--2, $k=20\pi$, cavity}
      \label{fig:cavity_k20pi_dsm2}
    \end{subfigure}  

\caption{Reconstructions using the LSM (left column), DSM--1 (middle column), and DSM--2 (right column) at wavenumber $k=20\pi$ for the 5-arms domain (top row), the 11-arms domain (middle row), and the cavity (bottom row).}    
    \label{fig:ex1_rec20pi}
\end{figure}

\subsection{Example 2 -- Noisy data}
Next, we examine the behavior of the methods when the available data contain two types of noise: additive noise and multiplicative noise. We use the 5-arms domain for this experiment, and the data are generated at the wavenumber $k=2\pi$. The number of incident directions and receivers is given by $N_d=N_r=128$.

To add noise, we generate a complex random number $\delta\in \mathbb{C}$ whose real and imaginary parts are independently sampled from a normal distribution with zero mean and unit variance. The measurements used in this experiment $\tilde{u}^\infty$ are obtained by adding noise to the noiseless data $u^\infty$. For the additive noise, the noise is added using the formula:
\begin{equation*}
    \tilde{u}^\infty = u^\infty + c_i \frac{\delta}{|\delta|} |u^\infty|,
\end{equation*}
and for the multiplicative noise, the noise is added using the formula:
\begin{equation*}
    \tilde{u}^\infty = u^\infty + c_i \frac{\delta}{|\delta|},
\end{equation*}
where $c_i$ is the noise level. We choose $c_i=0.05$, $0.5$, and $1$, corresponding to $5\%$, $50\%$, and $100\%$, respectively.

For the LSM reconstruction, the regularization parameter was chosen empirically as $\alpha = 10^{-1}$ for all noise levels and noise types. The indicator function for all methods is computed on the same grid used in the previous example.

The results are presented in Figures \ref{fig:ex2_rec2pi_add} and \ref{fig:ex2_rec2pi_multi}, corresponding to the additive and multiplicative noisy data, respectively. Note that the DSM is more robust to noise than the LSM, as it produces reconstructions that remain much closer to the original domain even as the noise level increases. In contrast, the LSM reconstruction deteriorates under both noise types, with an increased degradation in the presence of additive noise.

\begin{figure}[!ht]
    \centering
    \begin{subfigure}[b]{0.3\linewidth}
      \centering
      \includegraphics[width=1\linewidth]{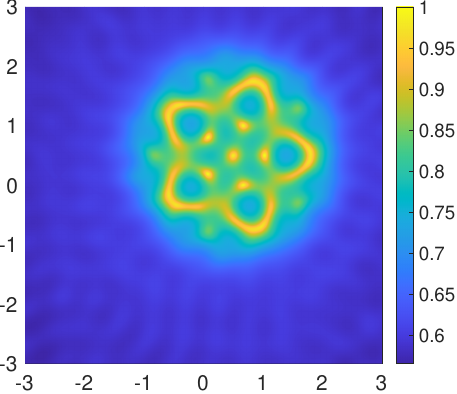}
      \caption{LSM, additive, $\delta=0.05$}
      \label{fig:5arms_k2pi_lsm_add005}
    \end{subfigure} \quad
    \begin{subfigure}[b]{0.3\linewidth}
      \centering
      \includegraphics[width=1\linewidth]{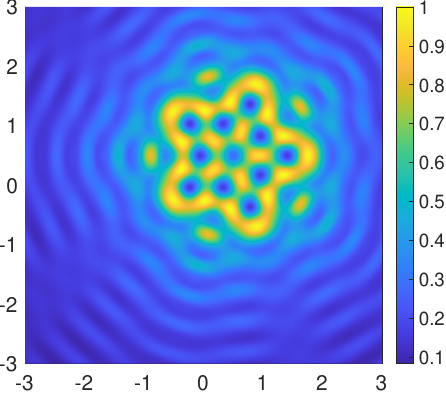}
      \caption{DSM--1, additive, $\delta=0.05$}
      \label{fig:5arms_k2pi_dsm1_add005}
    \end{subfigure}   \quad
    \begin{subfigure}[b]{0.3\linewidth}
      \centering
      \includegraphics[width=1\linewidth]{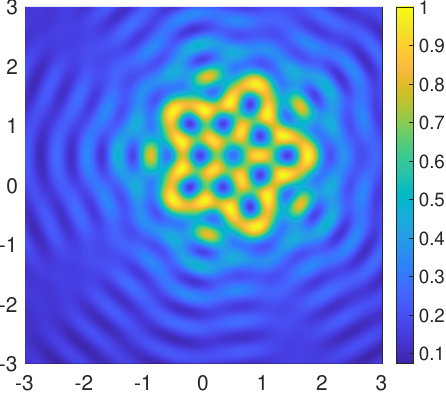}
      \caption{DSM--2, additive, $\delta=0.05$}
      \label{fig:5arms_k2pi_dsm2_add005}
    \end{subfigure}        

    \centering
    \begin{subfigure}[b]{0.3\linewidth}
      \centering
      \includegraphics[width=1\linewidth]{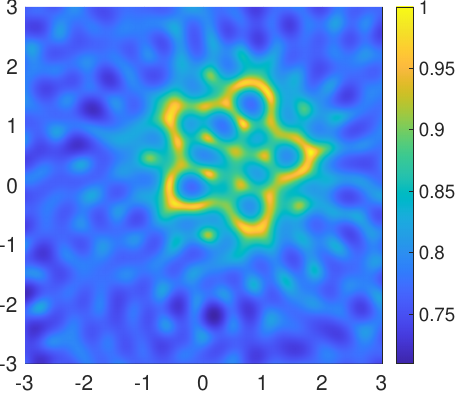}
      \caption{LSM, additive, $\delta=0.5$}
      \label{fig:5arms_k2pi_lsm_add05}
    \end{subfigure} \quad
    \begin{subfigure}[b]{0.3\linewidth}
      \centering
      \includegraphics[width=1\linewidth]{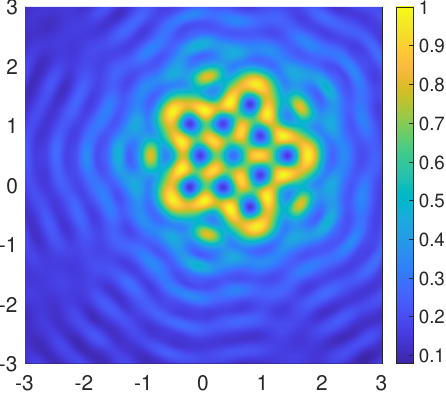}
      \caption{DSM--1, additive, $\delta=0.5$}
      \label{fig:5arms_k2pi_dsm1_add05}
    \end{subfigure}   \quad
    \begin{subfigure}[b]{0.3\linewidth}
      \centering
      \includegraphics[width=1\linewidth]{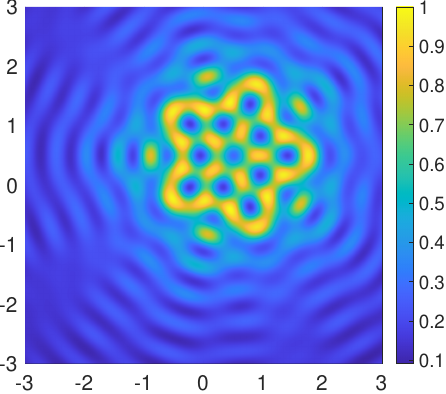}
      \caption{DSM--2, additive, $\delta=0.5$}
      \label{fig:5arms_k2pi_dsm2_add05}
    \end{subfigure} 

    \centering
    \begin{subfigure}[b]{0.3\linewidth}
      \centering
      \includegraphics[width=1\linewidth]{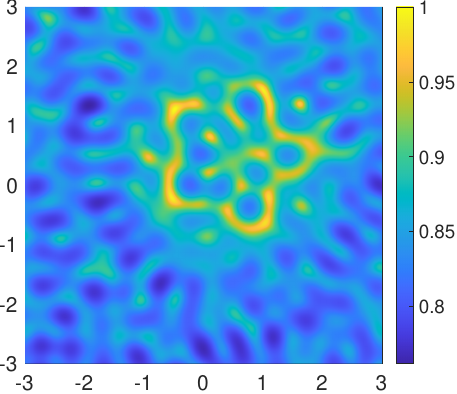}
      \caption{LSM, additive, $\delta=1.0$}
      \label{fig:5arms_k2pi_lsm_add1}
    \end{subfigure} \quad
    \begin{subfigure}[b]{0.3\linewidth}
      \centering
      \includegraphics[width=1\linewidth]{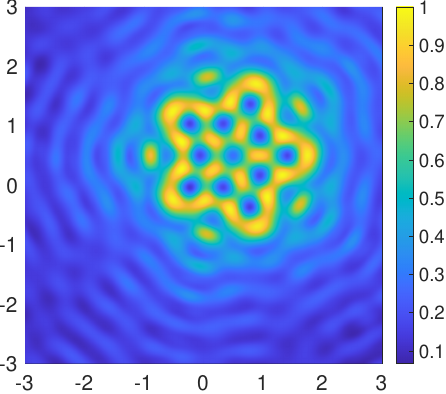}
      \caption{DSM--1, additive, $\delta=1.0$}
      \label{fig:5arms_k2pi_dsm1_add1}
    \end{subfigure}   \quad
    \begin{subfigure}[b]{0.3\linewidth}
      \centering
      \includegraphics[width=1\linewidth]{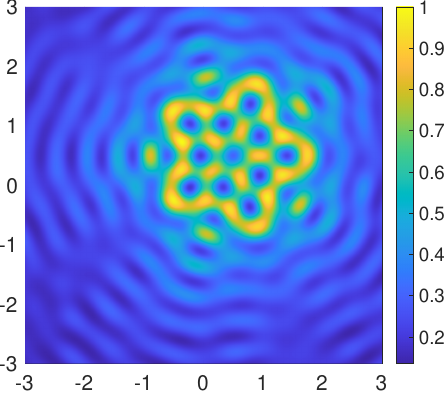}
      \caption{DSM--2, additive, $\delta=1.0$}
      \label{fig:5arms_k2pi_add1}
    \end{subfigure} 

\caption{Reconstructions using the LSM, DSM--1, and DSM--2 for the 5-arms domain at wavenumber $k=2\pi$ using data with 5\% (top row), 50\%(middle row), and 100\%(bottom row) {\bf additive} noise.}    
    \label{fig:ex2_rec2pi_add}
\end{figure}

\begin{figure}[!ht]
    \centering
    \begin{subfigure}[b]{0.3\linewidth}
      \centering
      \includegraphics[width=1\linewidth]{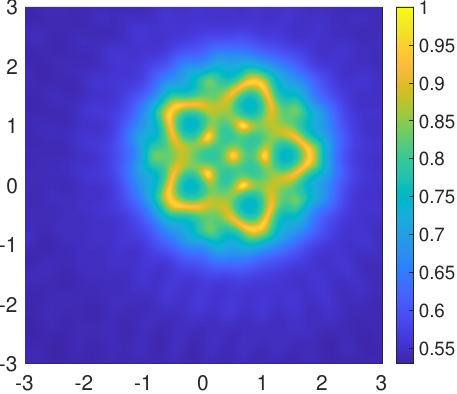}
      \caption{LSM, mult., $\delta=0.05$}
      \label{fig:5arms_k2pi_lsm_multi005}
    \end{subfigure} \quad
    \begin{subfigure}[b]{0.3\linewidth}
      \centering
      \includegraphics[width=1\linewidth]{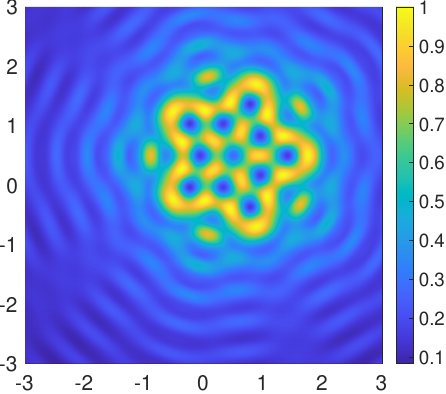}
      \caption{DSM--1, mult., $\delta=0.05$}
      \label{fig:5arms_k2pi_dsm1_multi005}
    \end{subfigure}   \quad
    \begin{subfigure}[b]{0.3\linewidth}
      \centering
      \includegraphics[width=1\linewidth]{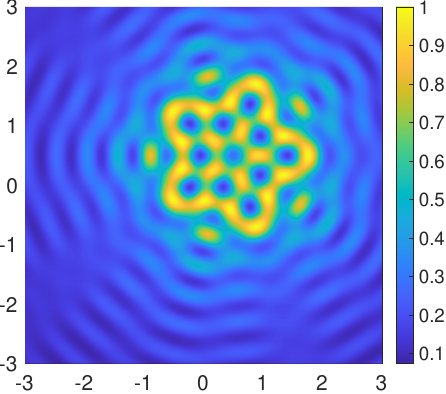}
      \caption{DSM--2, mult., $\delta=0.05$}
      \label{fig:5arms_k2pi_dsm2_multi005}
    \end{subfigure}        

    \centering
    \begin{subfigure}[b]{0.3\linewidth}
      \centering
      \includegraphics[width=1\linewidth]{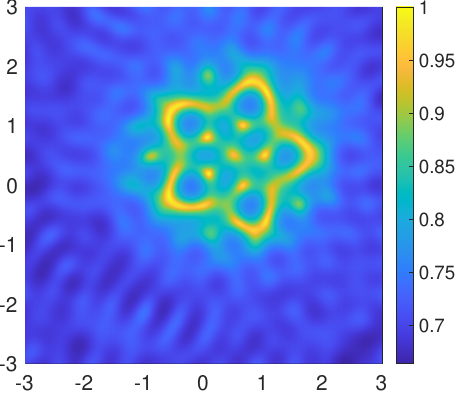}
      \caption{LSM, mult., $\delta=0.5$}
      \label{fig:5arms_k2pi_lsm_multi05}
    \end{subfigure} \quad
    \begin{subfigure}[b]{0.3\linewidth}
      \centering
      \includegraphics[width=1\linewidth]{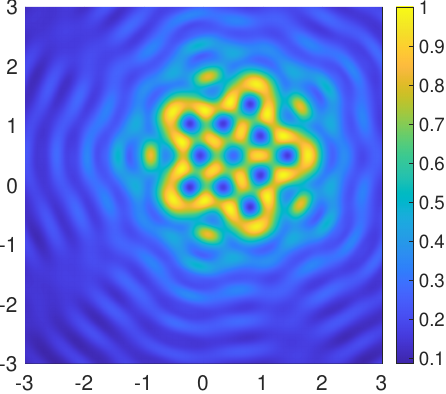}
      \caption{DSM--1, mult., $\delta=0.5$}
      \label{fig:5arms_k2pi_dsm1_multi05}
    \end{subfigure}   \quad
    \begin{subfigure}[b]{0.3\linewidth}
      \centering
      \includegraphics[width=1\linewidth]{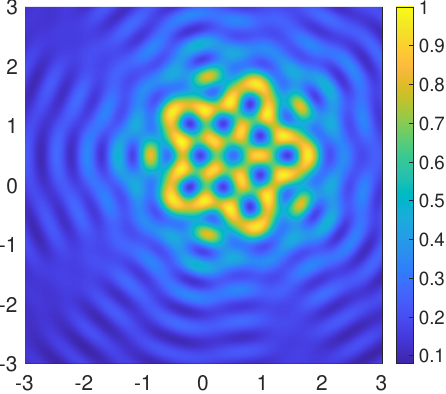}
      \caption{DSM--2, mult., $\delta=0.5$}
      \label{fig:5arms_k2pi_dsm2_multi05}
    \end{subfigure} 

    \centering
    \begin{subfigure}[b]{0.3\linewidth}
      \centering
      \includegraphics[width=1\linewidth]{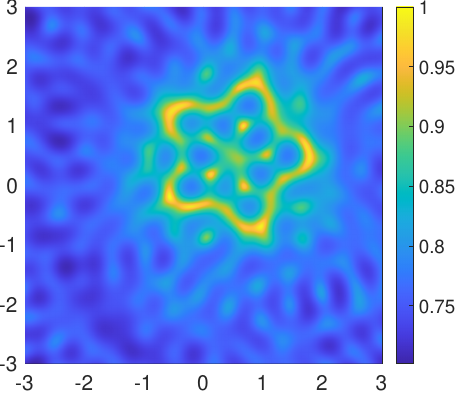}
      \caption{LSM, mult., $\delta=1.0$}
      \label{fig:5arms_k2pi_lsm_multi1}
    \end{subfigure} \quad
    \begin{subfigure}[b]{0.3\linewidth}
      \centering
      \includegraphics[width=1\linewidth]{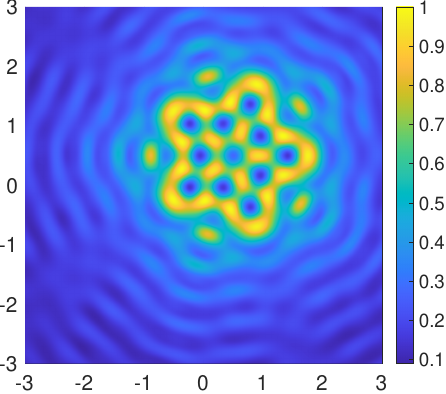}
      \caption{DSM--1, mult., $\delta=1.0$}
      \label{fig:5arms_k2pi_dsm1_multi1}
    \end{subfigure}   \quad
    \begin{subfigure}[b]{0.3\linewidth}
      \centering
      \includegraphics[width=1\linewidth]{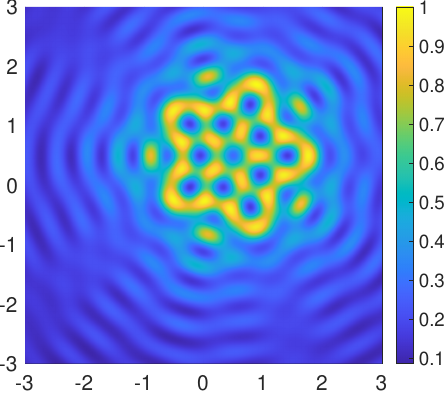}
      \caption{DSM--2, mult., $\delta=1.0$}
      \label{fig:5arms_k2pi_dsm2_multi1}
    \end{subfigure} 

\caption{Reconstructions using the LSM, DSM--1, and DSM--2 for the 5-arms domain at wavenumber $k=2\pi$ using data with 5\% (top row), 50\%(middle row), and 100\%(bottom row) {\bf multiplicative} noise.}    
    \label{fig:ex2_rec2pi_multi}
\end{figure}

\subsection{Example 3 -- Variable Poisson's Ratio}

In this experiment, we consider the effect of the Poisson's ratio by testing the sampling methods on problems with different values of $\nu$. We examine the behavior of the methods for $\nu = -0.5$, $0$, and $0.5$, which are approximately the Poisson's ratio for auxetic materials, cork, and rubber, respectively. The other examples in this paper use $\nu = 0.3$ which corresponds to ice and steel.

We use the 5-arms domain in this experiment. We consider two frequencies for the measurements, $k=2\pi$ and $k=20\pi$. For $k=2\pi$, we set $N_d=N_r=128$; for $k=20\pi$, we use $N_d=N_r=1024$. To avoid inverse crimes, we insert $5\%$ additive noise to the measured data. The indicator functions of the inverse methods are calculated on a uniform grid of 300 points in each direction over the domain $[-3,3]$. The regularization parameter for the LSM was chosen to be $\alpha = 10^{-1}$.

The results are presented in Figures \ref{fig:ex4_2pi_poisson} for $k=2\pi$ and \ref{fig:ex4_20pi_poisson} for $20\pi$. The DSM indicator functions do not appear to vary significantly with the Poisson's ratio. In contrast, the LSM indicator function shows a slight dependence on $\nu$, though the results are  similar. 

\begin{figure}[!ht]
    \centering
    \begin{subfigure}[b]{0.3\linewidth}
      \centering
      \includegraphics[width=1\linewidth]{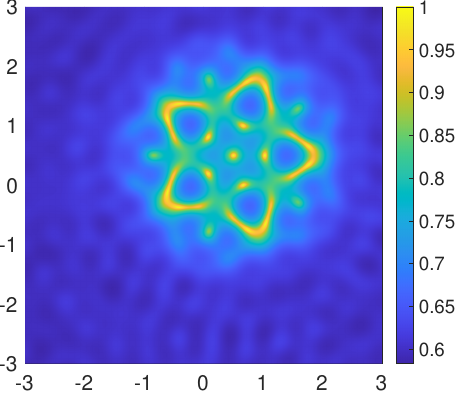}
      \caption{LSM, $k=2\pi$, $\nu = -0.5$}
      \label{fig:5arms_k2pi_lsm_pr-0.5}
    \end{subfigure} \quad
    \begin{subfigure}[b]{0.3\linewidth}
      \centering
      \includegraphics[width=1\linewidth]{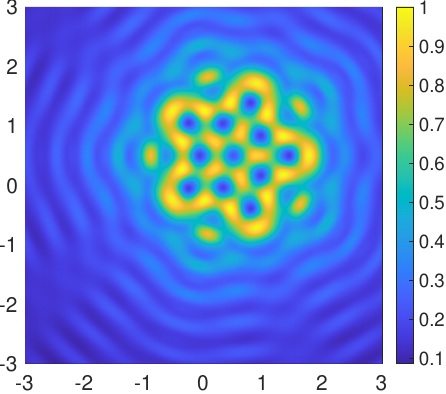}
      \caption{DSM--1, $k=2\pi$, $\nu = -0.5$}
      \label{fig:5arms_k2pi_dsm1_pr-0.5}
    \end{subfigure}   \quad
    \begin{subfigure}[b]{0.3\linewidth}
      \centering
      \includegraphics[width=1\linewidth]{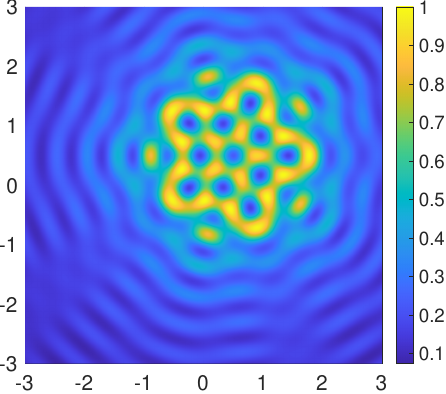}
      \caption{DSM--2, $k=2\pi$, $\nu = -0.5$}
      \label{fig:5arms_k2pi_dsm2_pr-0.5}
    \end{subfigure}        

    \centering
    \begin{subfigure}[b]{0.3\linewidth}
      \centering
      \includegraphics[width=1\linewidth]{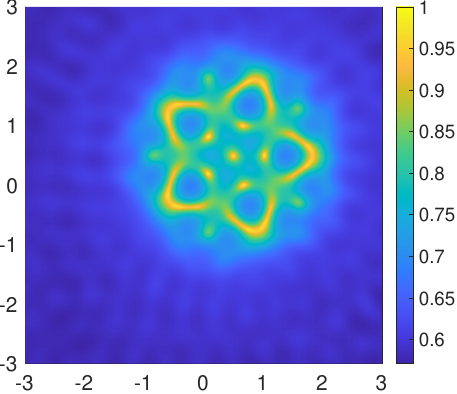}
      \caption{LSM, $k=2\pi$, $\nu = 0$}
      \label{fig:5arms_k2pi_lsm_pr0}
    \end{subfigure} \quad
    \begin{subfigure}[b]{0.3\linewidth}
      \centering
      \includegraphics[width=1\linewidth]{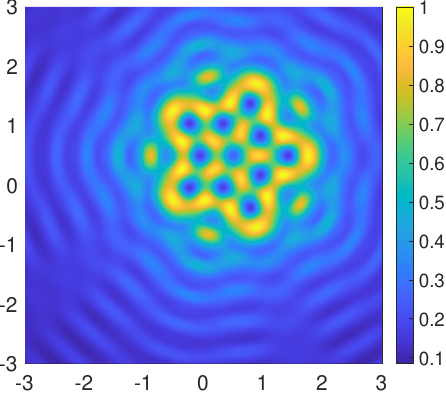}
      \caption{DSM--1, $k=2\pi$, $\nu = 0$}
      \label{fig:5arms_k2pi_dsm1_pr0}
    \end{subfigure}   \quad
    \begin{subfigure}[b]{0.3\linewidth}
      \centering
      \includegraphics[width=1\linewidth]{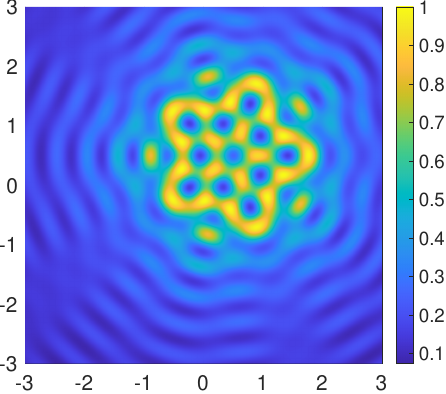}
      \caption{DSM--2, $k=2\pi$, $\nu = 0$}
      \label{fig:5arms_k2pi_dsm2_pr0}
    \end{subfigure} 

    \centering
    \begin{subfigure}[b]{0.3\linewidth}
      \centering
      \includegraphics[width=1\linewidth]{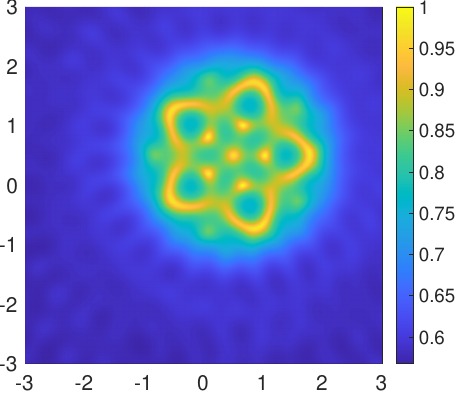}
      \caption{LSM, $k=2\pi$, $\nu = 0.5$}
      \label{fig:5arms_k2pi_lsm_pr0.5}
    \end{subfigure} \quad
    \begin{subfigure}[b]{0.3\linewidth}
      \centering
      \includegraphics[width=1\linewidth]{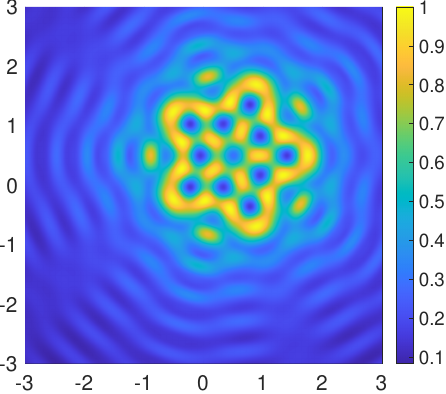}
      \caption{DSM--1, $k=2\pi$, $\nu = 0.5$}
      \label{fig:5arms_k2pi_dsm1_pr0.5}
    \end{subfigure}   \quad
    \begin{subfigure}[b]{0.3\linewidth}
      \centering
      \includegraphics[width=1\linewidth]{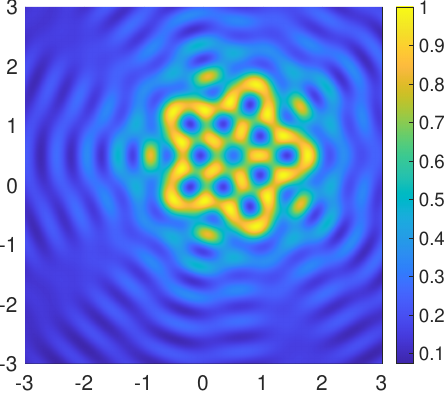}
      \caption{DSM--2, $k=2\pi$, $\nu = 0.5$}
      \label{fig:5arms_k2pi_dsm2_pr0.5}
    \end{subfigure} 

\caption{Reconstructions using the LSM, DSM--1, and DSM--2  for the 5-arms domain at wavenumber $k=2\pi$ with Poisson's ratio $\nu=-0.5$ (top row), $0$ (middle row), and $0.5$ (bottom row).}    
    \label{fig:ex4_2pi_poisson}
\end{figure}

\begin{figure}[!ht]
    \centering
    \begin{subfigure}[b]{0.3\linewidth}
      \centering
      \includegraphics[width=1\linewidth]{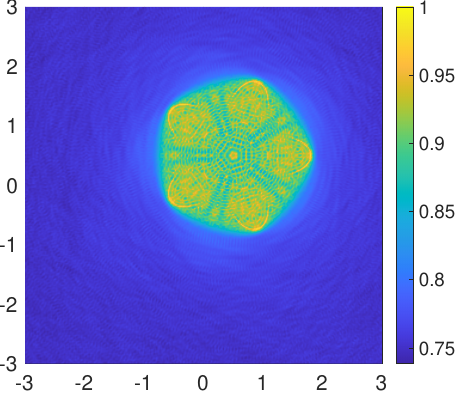}
      \caption{LSM, $k=20\pi$, $\nu = -0.5$}
      \label{fig:5arms_k20pi_lsm_pr-0.5}
    \end{subfigure} \quad
    \begin{subfigure}[b]{0.3\linewidth}
      \centering
      \includegraphics[width=1\linewidth]{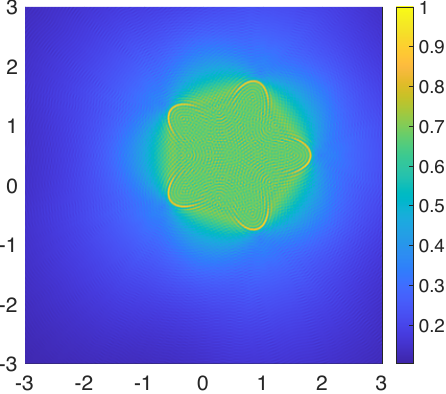}
      \caption{DSM--1, $k=20\pi$, $\nu = -0.5$}
      \label{fig:5arms_k02pi_dsm1_pr-0.5}
    \end{subfigure}   \quad
    \begin{subfigure}[b]{0.3\linewidth}
      \centering
      \includegraphics[width=1\linewidth]{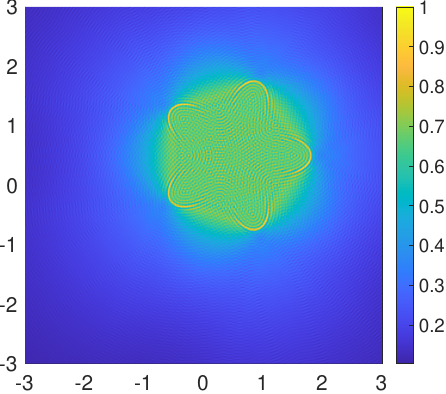}
      \caption{DSM--2, $k=20\pi$, $\nu = -0.5$}
      \label{fig:5arms_k20pi_dsm2_pr-0.5}
    \end{subfigure}        

    \centering
    \begin{subfigure}[b]{0.3\linewidth}
      \centering
      \includegraphics[width=1\linewidth]{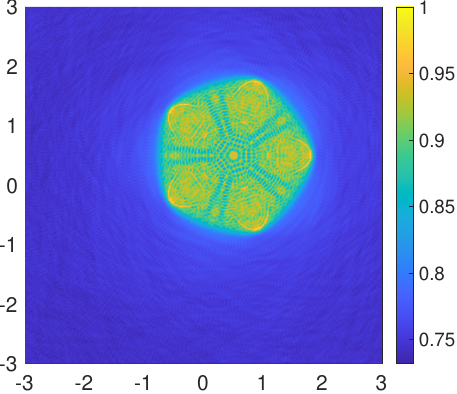}
      \caption{LSM, $k=20\pi$, $\nu = 0$}
      \label{fig:5arms_k20pi_lsm_pr0}
    \end{subfigure} \quad
    \begin{subfigure}[b]{0.3\linewidth}
      \centering
      \includegraphics[width=1\linewidth]{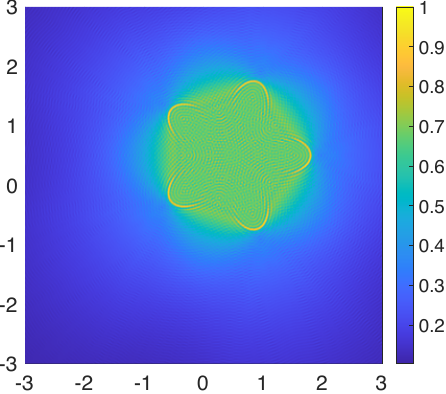}
      \caption{DSM--1, $k=20\pi$, $\nu = 0$}
      \label{fig:5arms_k20pi_dsm1_pr0}
    \end{subfigure}   \quad
    \begin{subfigure}[b]{0.3\linewidth}
      \centering
      \includegraphics[width=1\linewidth]{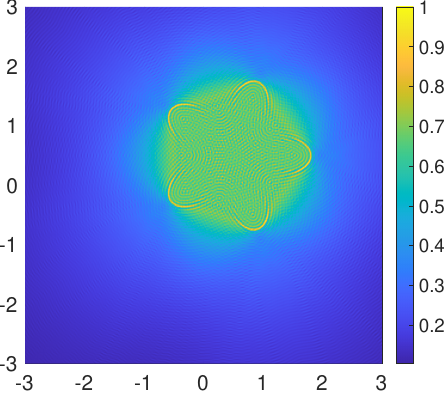}
      \caption{DSM--2, $k=20\pi$, $\nu = 0$}
      \label{fig:5arms_k20pi_dsm2_pr0}
    \end{subfigure} 

    \centering
    \begin{subfigure}[b]{0.3\linewidth}
      \centering
      \includegraphics[width=1\linewidth]{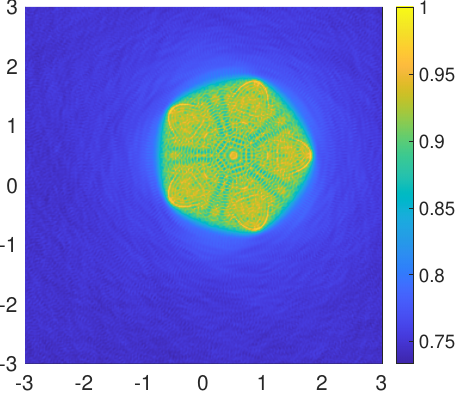}
      \caption{LSM, $k=20\pi$, $\nu = 0.5$}
      \label{fig:5arms_k20pi_lsm_pr0.5}
    \end{subfigure} \quad
    \begin{subfigure}[b]{0.3\linewidth}
      \centering
      \includegraphics[width=1\linewidth]{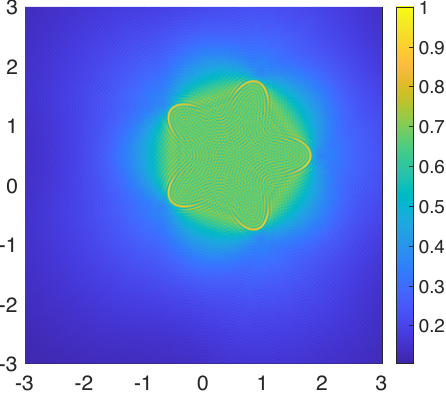}
      \caption{DSM--1, $k=20\pi$, $\nu = 0.5$}
      \label{fig:5arms_k20pi_dsm1_pr0.5}
    \end{subfigure}   \quad
    \begin{subfigure}[b]{0.3\linewidth}
      \centering
      \includegraphics[width=1\linewidth]{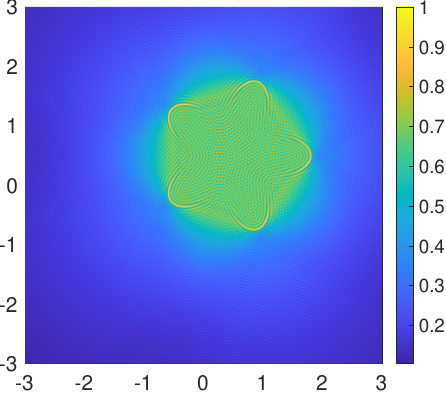}
      \caption{DSM--2, $k=20\pi$, $\nu = 0.5$}
      \label{fig:5arms_k20pi_dsm2_pr0.5}
    \end{subfigure} 

\caption{Reconstructions using the LSM, DSM--1, and DSM--2  for the 5-arms domain at wavenumber $k=20\pi$ with Poisson's ratio $\nu=-0.5$ (top row), $0$ (middle row), and $0.5$ (bottom row).}    
    \label{fig:ex4_20pi_poisson}
\end{figure}

\subsection{Example 4 -- Multiple obstacles}

One of the major advantages of sampling methods is their ability to handle problems involving multiple obstacles without requiring any prior information about the underlying configuration. In this example, the domain consists of three 5-arms obstacles centered at $(2,2.5)$, $(2,-2.5)$, and $(-2,0)$.

The incident waves have wavenumbers $k=2\pi$ and $k=10\pi$. For $k=2\pi$, we use $N_d=N_r=128$, and for $k=10\pi$, we use $N_d=N_r=512$. We add $5\%$ additive noise to the data to avoid inverse crimes. The indicator functions for the reconstruction methods are calculated on a uniform grid over the domain $[-10,10]$ with $500$ uniformly spaced points along each axis. As in the other examples, for the LSM, we use the regularization parameter $\alpha = 10^{-1}$. 

The results are shown in Figure \ref{fig:ex4_multi}. All methods produce good approximations of the obstacles, with the DSM yielding slightly more sharply defined reconstructions than the LSM. Note that recovering fine details between the obstacles is especially challenging.

\begin{figure}[!ht]
    \centering
    \begin{subfigure}[b]{0.3\linewidth}
      \centering
      \includegraphics[width=1\linewidth]{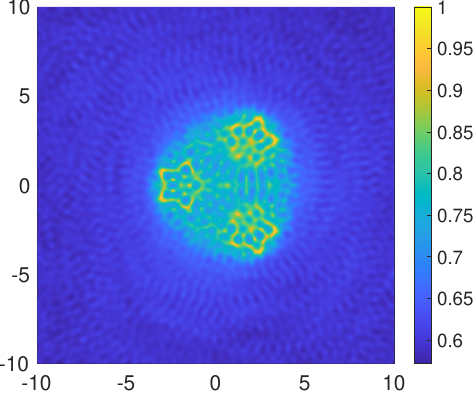}
      \caption{LSM, $k=2\pi$}
      \label{fig:multi_k2pi_lsm_add005}
    \end{subfigure} \quad
    \begin{subfigure}[b]{0.3\linewidth}
      \centering
      \includegraphics[width=1\linewidth]{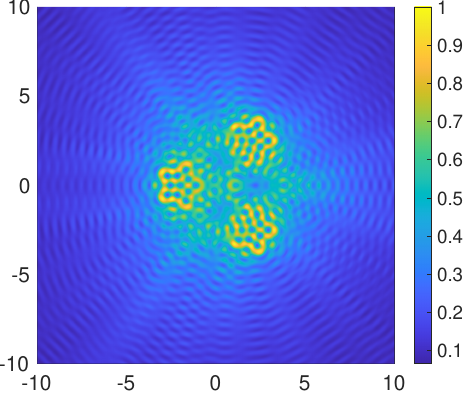}
      \caption{DSM--1, $k=2\pi$}
      \label{fig:multi_k2pi_dsm1_add005}
    \end{subfigure}   \quad
    \begin{subfigure}[b]{0.3\linewidth}
      \centering
      \includegraphics[width=1\linewidth]{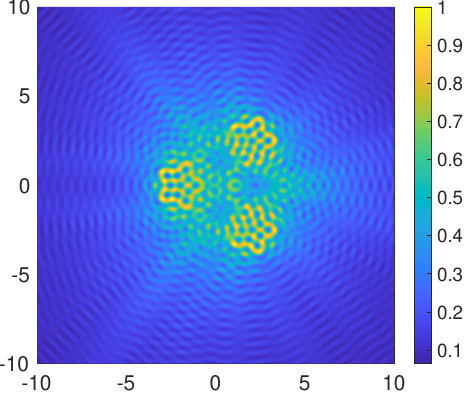}
      \caption{DSM--2, $k=2\pi$}
      \label{fig:multi_k2pi_dsm2_add005}
    \end{subfigure}        

   \centering
    \begin{subfigure}[b]{0.3\linewidth}
      \centering
      \includegraphics[width=1\linewidth]{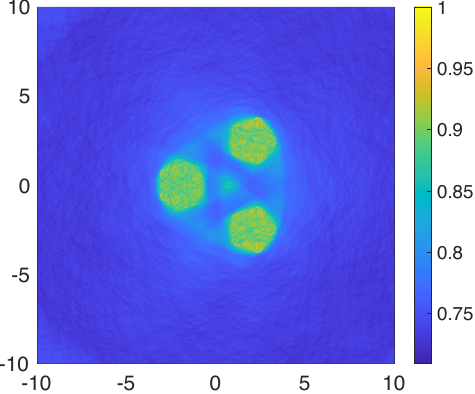}
      \caption{LSM, $k=10\pi$}
      \label{fig:multi_k10pi_lsm_add005}
    \end{subfigure} \quad
    \begin{subfigure}[b]{0.3\linewidth}
      \centering
      \includegraphics[width=1\linewidth]{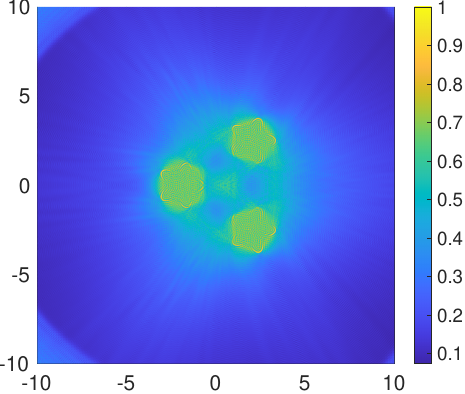}
      \caption{DSM--1, $k=10\pi$}
      \label{fig:multi_k10pi_dsm1_add005}
    \end{subfigure}   \quad
    \begin{subfigure}[b]{0.3\linewidth}
      \centering
      \includegraphics[width=1\linewidth]{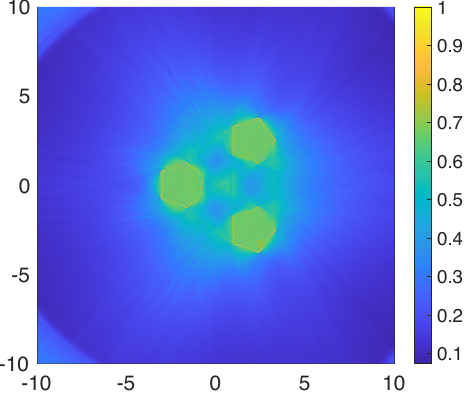}
      \caption{DSM--2, $k=10\pi$}
      \label{fig:multi_k10pi_dsm2_add005}
    \end{subfigure}     

\caption{Reconstructions using the LSM, DSM--1, and DSM--2 for the multiple-obstacle example at wavenumbers $k=2\pi$ (top row) and $k=10\pi$ (bottom row).}
    \label{fig:ex4_multi}
\end{figure}

\subsection{Example 5 -- Limited Data}

Finally, we report how the amount of available data affects the reconstructions. In our experiments, we observed that when the data are insufficient--particularly for the LSM--the quality of the reconstructions deteriorates. We consider two scenarios: a single 5-arms domain, and a multiple-obstacle configuration consisting of three 5-arms obstacles.

For the single obstacle case, we take $k=20\pi$, while for the multiple-obstacle case we use $k=10\pi$. In both settings, we use $N_d=N_r=128$. The indicator functions for the methods are calculated on the domain $[-3,3]^2$ with $300$ points per axis for the single‑obstacle case, and on the domain $[-10,10]^2$ with $500$ points per axis for the multiple‑obstacle case. The regularization parameter is chosen empirically as $10^{-1}$. Additive noise of $5\%$ is added to the measured data to avoid inverse crimes.

The results are shown in Figure \ref{fig:ex5_limited_data}. The upper row presents the reconstructions for the single‑obstacle experiment, while the bottom row displays the results for multiple obstacles. Note that the LSM reconstruction deteriorates more significantly than those obtained with the DSM when the amount of data is limited. In particular, for the multiple‑obstacle case, the LSM reconstruction becomes unclear to the point where even the number of obstacles is difficult to identify.

\begin{figure}[!ht]
    \centering
    \begin{subfigure}[b]{0.3\linewidth}
      \centering
      \includegraphics[width=1\linewidth]{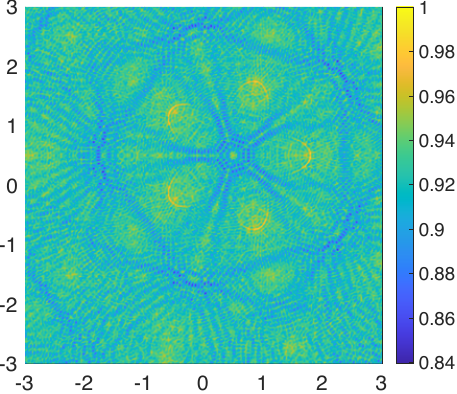}
      \caption{LSM, single obstacle}
      \label{fig:5arms_k20pi_lsm_data128}
    \end{subfigure} \quad
    \begin{subfigure}[b]{0.3\linewidth}
      \centering
      \includegraphics[width=1\linewidth]{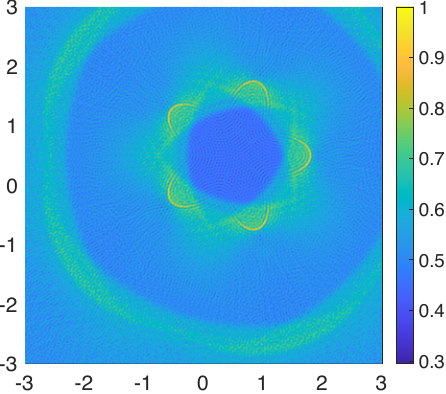}
      \caption{DSM--1, single obstacle}
      \label{fig:5arms_k20pi_dsm1_data128}
    \end{subfigure}   \quad
    \begin{subfigure}[b]{0.3\linewidth}
      \centering
      \includegraphics[width=1\linewidth]{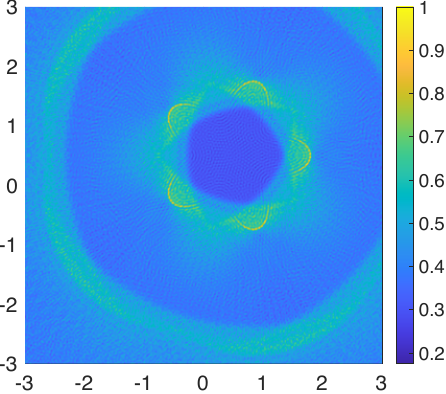}
      \caption{DSM--2, single obstacle}
      \label{fig:5arms_k20pi_dsm2_data128}
    \end{subfigure}        

   \centering
    \begin{subfigure}[b]{0.3\linewidth}
      \centering
      \includegraphics[width=1\linewidth]{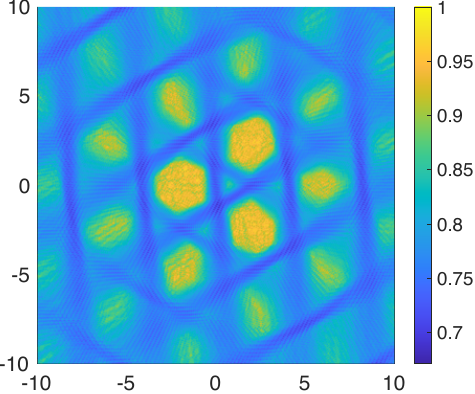}
      \caption{LSM, multiple obstacle}
      \label{fig:multi_k10pi_lsm_data128}
    \end{subfigure} \quad
    \begin{subfigure}[b]{0.3\linewidth}
      \centering
      \includegraphics[width=1\linewidth]{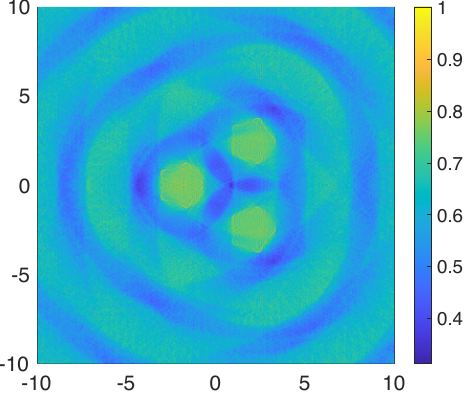}
      \caption{DSM--1, multiple obstacle}
      \label{fig:multi_k10pi_dsm1_data128}
    \end{subfigure}   \quad
    \begin{subfigure}[b]{0.3\linewidth}
      \centering
      \includegraphics[width=1\linewidth]{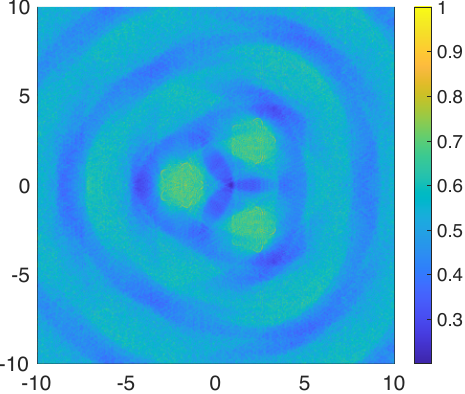}
      \caption{DSM--2, multiple obstacle}
      \label{fig:multi_k10pi_dsm2_data128}
    \end{subfigure}     

\caption{Reconstructions using the LSM, DSM--1, and DSM--2 with a limited amount of data ($N_d=N_r=128$). Top: incident wave $k=20\pi$ and a single 5-arms domain. Bottom: incident wave $k=10\pi$ and the three‑obstacle configuration.}    
    \label{fig:ex5_limited_data}
\end{figure}


\section{Conclusions} \label{sec:conclusions}

We have developed a theoretical foundation for solving the biharmonic inverse scattering with supported plate boundary conditions using the linear sampling method (LSM) and the direct sampling method (DSM). To this end, we have established and characterized indicator functions that allow the recovery of impenetrable supported cavities. A comprehensive set of numerical experiments were used to examine the influence of frequency, noise, Poisson's ratio, amount of data, and number of obstacles.

The results show that both LSM and DSM reliably recover the obstacle’s position and its convex hull, with DSM offering slightly better robustness--especially under noise or limited data availability. Consistent with related findings in Helmholtz, neither method resolves small cavities, reflecting intrinsic limitations of qualitative far‑field approaches. LSM additionally requires careful regularization and is more sensitive to noise and data sparsity, though its performance remains comparable in well‑resolved scenarios. For multiple obstacles, both methods correctly identify the configuration, with DSM again maintaining stability under reduced data. 

These observations highlight the strengths and inherent limitations of sampling‑based techniques for biharmonic scattering. From a theoretical standpoint, we hope to extend the existing integral equation formulation for the supported plate to be able to handle spurious resonances and establish the invertibility of the forward operator for real frequencies. We also hope to extend the theory of sampling methods to the free plate boundary conditions. Lastly, we are interested in using these methods to develop boundary extraction heuristics and explore direct reconstruction methods based on neural networks as in \cite{zhou2023neural}. \\


\noindent{\bf Acknowledgments:} 
The authors thank Jeremy Hoskins, Tristan Goodwill, Adriaan de Clercq, and Isaac Harris for generous and insightful discussions. \\

\noindent{\bf Data availability statement:} 
 Data and code supporting the findings of this study are available upon reasonable request from the authors.

\bibliography{bib_file}

\end{document}